\pgfplotsset{compat = newest}
\newtheorem{theorem}{Theorem}
\newtheorem{lemma}[theorem]{Lemma}
\newtheorem{sublemma}{Sublemma}
\newtheorem{conjecture}[theorem]{Conjecture}
\newtheorem{definition}{Definition}
\newtheorem{corollary}[theorem]{Corollary}
\newtheorem{remark}[theorem]{Remark}
\newtheorem{claim}[theorem]{Claim}
\newcommand{\bd}{\rm bd}
\newcommand{\x}{\mathbf{x}}
\newcommand{\p}{\mathbf{p}}
\newcommand{\q}{\mathbf{q}}
\newcommand{\uu}{\mathbf{u}}
\newcommand{\vv}{\mathbf{v}}
\newcommand{\y}{\mathbf{y}}
\newcommand{\z}{\mathbf{z}}
\newcommand{\oo}{\mathbf{o}}
\newcommand{\aaa}{\mathbf{a}}
\newcommand{\bbb}{\mathbf{b}}
\newcommand{\ee}{\mathbf{e}}
\newcommand{\B}{\mathbf{B}}
\newcommand{\K}{\mathbf{K}}
\newcommand{\Ee}{\mathbb{E}}
\newcommand{\Ss}{\mathbb{S}}
\newcommand{\Rr}{\mathbb{R}}
\newcommand{\Ed}{\Ee^d}
\newcommand{\Rd}{\Rr^d}
\newcommand{\Sd}{\Ss^{d-1}}
\newcommand{\noshow}[1]{}
\newcommand{\Sedm}{{\mathbb S}^{d-1}}
\newcommand{\capp}[2]{C_{{\mathbb S}^{d-1}}\left[#1, #2\right]}
\newcommand{\copp}[2]{C_{{\mathbb S}^{d-1}}\left(#1, #2\right)}
\title{Illuminating spiky balls and cap bodies
\footnote{Keywords and phrases: Euclidean $d$-space, spiky ball, $2$-illuminable, cap body, unconditionally symmetric, illumination number, illumination conjecture. 
\newline \hspace*{.35cm} 2010 Mathematics Subject Classification: 52A20, 52A22.}
\author{K\'{a}roly Bezdek\thanks{Partially supported by a Natural Sciences and 
Engineering Research Council of Canada Discovery Grant.}, Ilya Ivanov, and Cameron Strachan}}
\date{}
\begin{document}

\maketitle

\begin{abstract}
\noindent The convex hull of a ball with an exterior point is called a spike (or cap). A union of finitely many spikes of a ball is called a spiky ball. If a spiky ball is convex, then we call it a cap body. In this note we upper bound the illumination numbers of $2$-illuminable spiky balls as well as centrally symmetric cap bodies. In particular, we prove the Illumination Conjecture for centrally symmetric cap bodies in sufficiently large dimensions by showing that any $d$-dimensional centrally symmetric cap body can be illuminated by $<2^d$ directions in Euclidean $d$-space for all $d\geq 20$. Furthermore, we strengthen the latter result for $1$-unconditionally symmetric cap bodies.
\end{abstract}

\section{Introduction}\label{sec:intro}

Let $\Ee^d$ denote the $d$-dimensional Euclidean vector space, with inner product $\langle\cdot ,\cdot\rangle$ and norm $\|\cdot\|$ and let $\ee_1, \dots, \ee_d$ be its standard basis. Its unit sphere centered at the origin $\oo$ is $\Sedm:= \{\x\in\Ee^d\ |\ \|\x\|= 1\}$. A {\it greatcircle} of $\Sedm$ is an intersection of $\Sedm$ with a plane of $\Ee^d$ passing through $\oo$.  Two points are called {\it antipodes} if they can be obtained as an intersection of $\Sedm$ with a line through $\oo$ in $\Ee^d$. If $\aaa ,\bbb\in\Sedm$ are two points that are not antipodes, then we label the (uniquely determined) shortest geodesic arc of $\Sedm$ connecting $\aaa$ and $\bbb$ by $\widehat{\aaa\bbb}$. In other words, $\widehat{\aaa\bbb}$ is the shorter circular arc with endpoints $\aaa$ and $\bbb$ of the greatcircle $\aaa\bbb$ that passes through $\aaa$ and $\bbb$. The length of $\widehat{\aaa\bbb}$ is called the {\it spherical} (or {\it angular}) {\it distance} between $\aaa$ and $\bbb$ and it is labelled by $l(\widehat{\aaa\bbb})$, where $0<l(\widehat{\aaa\bbb})<\pi$. The set $C_{{\mathbb S}^{d-1}}[\x, \alpha]:=\{\y\in{\mathbb S}^{d-1}\ |\ l(\widehat{\x,\y})\leq \alpha\}=\{\y\in{\mathbb S}^{d-1} | \langle\x ,\y\rangle\geq \cos\alpha\}$ (resp., $C_{{\mathbb S}^{d-1}}(\x, \alpha):=\{\y\in{\mathbb S}^{d-1}\ |\ l(\widehat{\x,\y})< \alpha\}=\{\y\in{\mathbb S}^{d-1} | \langle\x ,\y\rangle> \cos\alpha\}$) is called the closed (resp., open) {\it spherical cap} of angular radius $\alpha$ centered at $\x\in{\mathbb S}^{d-1}$ for $0<\alpha\leq \frac{\pi}{2}$.
The closed Euclidean ball of radius $r$ centered at $\p\in\Ed$ is denoted by $\B^d[\p,r]:=\{\q\in\Ed\ |\  |\p-\q|\leq r\}$. A $d$-dimensional \textit{convex body} $\mathbf{K}$ is a compact convex subset of ${\mathbb{E}}^{d}$ with non-empty interior. Then $\mathbf{K}$ is said to be \textit{$\mathbf{o}$-symmetric} if $\mathbf{K}=-\mathbf{K}$ and $\mathbf{K}$ is called \textit{centrally symmetric} if some translate of $\mathbf{K}$ is $\mathbf{o}$-symmetric. A light source at a point $\mathbf{p}$ outside a convex body $\mathbf{K}\subset{\mathbb{E}}^{d}$, \textit{illuminates} a point $\mathbf{x}$ on the boundary of $\mathbf{K}$ if the halfline originating from $\mathbf{p}$ and passing through $\mathbf{x}$ intersects the interior of $\mathbf{K}$ at a point not lying between $\mathbf{p}$ and $\mathbf{x}$. The set of points $\{\mathbf{p}_{i}:i=1,\ldots,n\}$ in the exterior of $\mathbf{K}$ is said to \textit{illuminate} $\mathbf{K}$ if every boundary point of $\mathbf{K}$ is illuminated by some $\mathbf{p}_{i}$. The \textit{illumination number} $I(\mathbf{K})$ of $\mathbf{K}$ is the smallest $n$ for which $\mathbf{K}$ can be illuminated by $n$ point light sources. One can also consider illumination of $\mathbf{K}\subset{\mathbb{E}}^{d}$ by directions instead of by exterior points. We say that a point $\mathbf{x}$ on the boundary of $\mathbf{K}$ is illuminated in the direction $\mathbf{v}\in\mathbb{S}^{d-1} $ if the halfline originating from $\mathbf{x}$ and with direction vector $\mathbf{v}$ intersects the interior of $\mathbf{K}$. The former notion of illumination was introduced by Hadwiger \cite{hadwiger2}, while the latter notion is due to Boltyanski \cite{boltyanski1}. It may not come as a surprise that the two concepts are equivalent in the sense that a convex body $\mathbf{K}$ can be illuminated by $n$ point sources if and only if it can be illuminated by $n$ directions. The following conjecture of Boltyanski \cite{boltyanski1} and Hadwiger \cite{hadwiger2} has become a central problem of convex and discrete geometry and inspired a significant body of research. 
\begin{conjecture}[\textbf{Illumination Conjecture}]\label{illumination conjecture}
The illumination number $I(\mathbf{K})$ of any $d$-dimensional convex body $\mathbf{K}$, $d\geq 3$, is at most $2^d$ and $I(\mathbf{K}) = 2^d$ only if $\mathbf{K}$ is an affine $d$-cube.
\end{conjecture}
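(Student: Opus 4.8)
\medskip
\noindent
\emph{Step 1 (reformulation as a covering problem).} Since the statement above is the general Illumination Conjecture, what follows is a research program rather than a short proof, and I will be explicit about where it genuinely stalls. I would first invoke the classical equivalence of Boltyanski, Hadwiger, and Gohberg--Markus: for a convex body $\mathbf{K}\subset\Ee^d$, the illumination number $I(\mathbf{K})$ equals the least number of translates of $\lambda\mathbf{K}$, with $0<\lambda<1$, whose union covers $\mathbf{K}$; equivalently, the least number of smaller positive homothets of $\mathbf{K}$ needed to cover $\mathbf{K}$. In this form the target $2^d$ and the extremal role of the cube are transparent: an affine $d$-cube needs exactly $2^d$ homothets of ratio slightly above $\tfrac12$, one clustered at each vertex, and no family of $<2^d$ smaller homothets covers it. For a polytope $\mathbf{P}$ the problem sharpens further: $\mathbf{P}$ is illuminated as soon as all its vertices are, and the directions illuminating a vertex $\mathbf{w}$ form the interior of the polar of the normal cone at $\mathbf{w}$, so $I(\mathbf{P})$ is the minimum number of unit vectors meeting every such open ``illumination cone'' on $\Sedm$ --- a concrete spherical covering problem. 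Essentially all known cases of the conjecture (smooth bodies, zonoids, belt bodies, bodies of sufficiently large constant width, bodies with many symmetries, and --- by the present note --- cap bodies in high dimensions) are instances in which this covering problem can be solved by exploiting extra structure.

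\emph{Step 2 (the centrally symmetric case).} Normalize $\mathbf{K}=-\mathbf{K}$ and work with the norm $\|\cdot\|_{\mathbf{K}}$. A maximal $\tfrac{1-\lambda}{2}$-separated subset of $\mathbf{K}$ has at most $\bigl(1+\tfrac{2}{1-\lambda}\bigr)^d$ points and the $\lambda\mathbf{K}$-balls about them cover $\mathbf{K}$; sharpening this with Rogers' economical covering theorem and the Rogers--Shephard inequality (the Rogers--Zong estimate) gives $I(\mathbf{K})\le 2^d\cdot c\,d\log d$ for symmetric $\mathbf{K}$, which already has the correct exponential main term. The plan is to remove the stray polynomial factor by replacing the union bound with a two-stage argument: a random covering produces the $2^d$ bulk, and then a deterministic \emph{boundary correction} cleans up the uncovered part of $\mathrm{bd}\,\mathbf{K}$, using that near an extreme point $\mathbf{K}$ is locally trapped in a cone, so the leftover is far thinner than a worst-case estimate predicts. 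Running this with $\lambda$ bounded away from $1$ (which supplies genuine slack) and tracking it quantitatively should in principle yield $I(\mathbf{K})\le 2^d$ together with a \emph{stability} statement: if $I(\mathbf{K})=2^d$ then $\mathbf{K}$ is close in the Banach--Mazur sense to a cube, hence --- after a separate rigidity step --- equal to one.

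\emph{Step 3 (the general case, and the main obstacle).} For a body without central symmetry the best available bounds (Erd\H{o}s--Rogers, Rogers--Zong) are of order $\binom{2d}{d}\cdot\mathrm{poly}(d)\approx 4^d\,\mathrm{poly}(d)$: passing to the difference body $\mathbf{K}-\mathbf{K}$ loses a Rogers--Shephard factor $\binom{2d}{d}$, a \emph{full exponential} factor beyond $2^d$. Closing this gap is the crux, and it is exactly where no current technique suffices. One would want to show that asymmetry never makes covering by small homothets harder than in the symmetric regime --- not through the far-too-weak $I(\mathbf{K})\le I(\mathbf{K}-\mathbf{K})$, but via a direct comparison of $\mathbf{K}$ with a well-chosen $\mathbf{K}\cap(-\mathbf{K}+\mathbf{t})$, or by bootstrapping over the known classes until they are seen to ``generate'' all convex bodies. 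I expect Step 3, together with the full rigidity/equality assertion (delicate because $I(\cdot)$ is integer-valued and neither upper nor lower semicontinuous, so ``near $2^d$'' must be converted into genuine structure), to be the decisive obstacle: Steps 1 and 2 are, respectively, classical and within polynomial reach, whereas Step 3 is precisely what has kept Conjecture~\ref{illumination conjecture} open.
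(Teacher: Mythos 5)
You have been asked to ``prove'' Conjecture~\ref{illumination conjecture}, but this statement is a \emph{conjecture} in the paper, not a theorem: the authors do not prove it, and indeed it has been open since 1960. The paper only establishes it for restricted classes (centrally symmetric cap bodies in high dimensions, $1$-unconditionally symmetric cap bodies, and $2$-illuminable spiky balls with various bounds). Your write-up, to your credit, recognizes this: you explicitly label Steps~1--3 as a research program and identify the crux --- removing the extra $\binom{2d}{d}$ factor that the difference-body/Rogers--Shephard route incurs for non-symmetric bodies, plus the integer-valued rigidity needed for the equality case --- as precisely the open obstruction. That assessment is accurate. The background you marshal (Gohberg--Markus covering reformulation, the spherical covering reinterpretation for polytopes via normal cones, the Rogers--Zong $2^d\,c\,d\log d$ bound in the symmetric case, the $4^d\,\mathrm{poly}(d)$ bound in general, the Huang--Slomka--Tkocz--Vritsiou improvement, and the list of special classes) is consistent with the paper's introduction. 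Two small cautions: your Step~2 ``boundary correction'' heuristic is not in the literature and, as phrased, would establish the full symmetric case of the conjecture, which remains open, so the phrase ``should in principle yield $I(\mathbf{K})\le 2^d$'' overstates what the sketch delivers; and in Step~1 the polytope reduction to vertices requires a little care for unbounded normal cones at non-exposed points, though for polytopes every vertex is exposed and the reduction is fine. Bottom line: there is nothing in the paper to compare your argument against, because the paper never claims a proof of this statement; your proposal correctly refrains from claiming one either, and correctly locates the gap.
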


While Conjecture~\ref{illumination conjecture} has been proved in the plane (\cite{boltyanski1}, \cite{gohberg1}, \cite{hadwiger2}, and \cite{lev1}), it is open for dimensions larger than $2$. On the other hand, there are numerous partial results supporting Conjecture~\ref{illumination conjecture} in dimensions greater than $2$. For details we refer the interested reader to the recent survey article \cite{BeKh} and the references mentioned there. Here we highlight only the following results. Let $\mathbf{K}$ be an arbitrary $d$-dimensional convex body with $d>1$. Rogers \cite{Ro} (see also \cite{RoZo}) has proved that $I(\mathbf{K})\leq \binom{2d}{d}d(\ln d+\ln\ln d +5)=O(4^d\sqrt{d}\ln d)$. Huang, Slomka, Tkocz, and Vritsiou \cite{HSTV} improved this bound of Rogers for sufficiently large values of $d$ to $c_14^de^{-c_2\sqrt{d}}$,
where $c_1,c_2>0$ are universal constants. Lassak \cite{Las} improved the upper bound of Rogers for some small values of $d$ to $ (d+1)d^{d-1}-(d-1)(d-2)^{d-1}$. In fact, the best upper bounds for the illumination numbers of convex bodies in dimensions $3,4,5,6$ are $16$ (\cite{Pa}), $96, 1091, 15373$ (\cite{PS}). The best upper bound for the illumination numbers of {\it centrally symmetric} convex bodies of $\Ee^d$, $d>1$ is $2^dd(\ln d+\ln\ln d +5)$ proved by Rogers (\cite{Ro} and \cite{RoZo}). In connection with this upper bound we note that \cite{Ti} proves Conjecture~\ref{illumination conjecture} for \textit{unit balls of $1$-symmetric norms} in $\Rd$ provided that $d$ is sufficiently large. We also mention in passing that Conjecture~\ref{illumination conjecture} has been confirmed for certain classes of convex bodies such as {\it wide ball-bodies} including {\it convex bodies of constant width} (\cite{BeKi}, \cite{Be11}, \cite{Be12}, \cite{BPR}, \cite{Sch}), {\it convex bodies of Helly dimension $2$} (\cite{bo3}), and {\it belt-bodies} including {\it zonoids} and {\it zonotopes} (\cite{bol}). The present article has been motivated by the investigations in \cite{Na} and it aims at proving Conjecture~\ref{illumination conjecture} for sufficiently high dimensional {\it centrally symmetric cap bodies} studied under the name centrally symmetric spiky balls in \cite{Na}. Actually, we do a bit more. The details are as follows. 

\begin{definition}\label{spiky ball and cap body}
Let $\B^d:=\B^d[\oo,1]$ and let $\x_1,\dots ,\x_n\in\Ee^d\setminus\B^d$. Then $${\rm Sp}_{\B^d}[\x_1,\dots ,\x_n]:=\bigcup_{i=1}^n{\rm conv}(\B^d\cup\{\x_i\})$$ is called a {\rm spiky (unit) ball}, where ${\rm conv}(\cdot)$ refers to the convex hull of the corresponding set. If $\x_i\notin \bigcup_{1\leq j\leq n, j\neq i}{\rm conv}(\B^d\cup\{\x_j\})$ holds for some $1\leq i\leq n$, then  $\x_i$ is called a {\rm vertex} of ${\rm Sp}_{\B^d}[\x_1,\dots ,\x_n]$.  A point $\mathbf{x}$ on the boundary of the spiky ball ${\rm Sp}_{\B^d}[\x_1,\dots ,\x_n]$ is {\rm illuminated in the direction} $\mathbf{v}\in\mathbb{S}^{d-1} $ if the halfline originating from $\mathbf{x}$ and with direction vector $\mathbf{v}$ intersects the interior of ${\rm Sp}_{\B^d}[\x_1,\dots ,\x_n]$ in points arbitrarily close to $\x$. Furthermore, the set of directions $\{\mathbf{v}_{i}:i=1,\ldots,m\}\subset {\mathbb S}^{d-1}$ is said to {\rm illuminate} ${\rm Sp}_{\B^d}[\x_1,\dots ,\x_n]$ if every boundary point of ${\rm Sp}_{\B^d}[\x_1,\dots ,\x_n]$ is illuminated by some $\mathbf{v}_{i}$. The {\rm illumination number} $I({\rm Sp}_{\B^d}[\x_1,\dots ,\x_n])$ of ${\rm Sp}_{\B^d}[\x_1,\dots ,\x_n]$ is the smallest $m$ for which ${\rm Sp}_{\B^d}[\x_1,\dots ,\x_n]$ can be illuminated by $m$ directions. Moreover, we say that the spiky ball ${\rm Sp}_{\B^d}[\x_1,\dots ,\x_n]$ with vertices $\x_1,\dots ,\x_n$ is {\rm $2$-illuminable} if any two of its vertices can be simultaneously illuminated by a direction in $\Ee^d$. Finally, ${\rm Sp}_{\B^d}[\x_1,\dots ,\x_n]$ is called a {\rm cap body} if it is a convex body in $\Ee^d$.(See Figure~\ref{fig:test}.)
\end{definition}

 We note that cap bodies were first studied by Minkowski \cite{mink}.

\begin{figure}
\centering
\begin{subfigure}{.5\textwidth}
  \centering
      \begin{tikzpicture} 
\node [above right, inner sep=0] (image) at (0,0) {\includegraphics[width=0.8\linewidth]{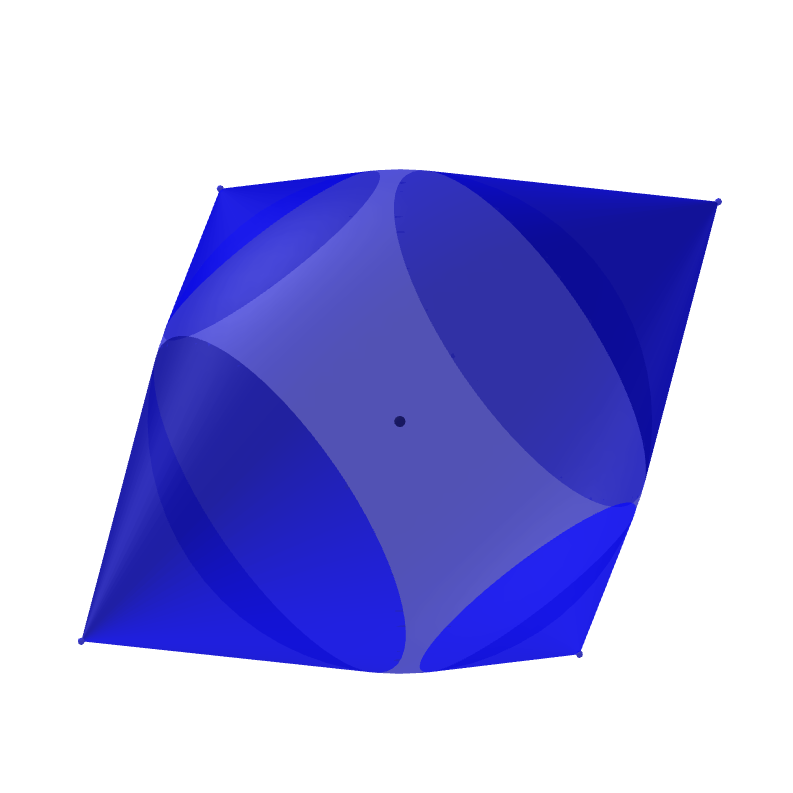}};    
   
\begin{scope}[
x={($0.1*(image.south east)$)},
y={($0.1*(image.north west)$)}];
    
    \node[left] at (3,8){\large $\x_1$};    
    \node[below left] at (1,2){\large $\x_2$};    
    \node[below right] at (7,1.75){\large $-\x_1$};
    \node[above right] at (9,7){\large $-\x_2$};
    \node[below right] at (5,5.25){\large $\oo$};
    
\end{scope}
\end{tikzpicture}
  \caption{An $\oo$-symmetric convex spiky ball, i.e., a cap body.}
  
\end{subfigure}%
\begin{subfigure}{.5\textwidth}
  \centering
      \begin{tikzpicture} 
\node [above right, inner sep=0] (image) at (0,0) {\includegraphics[width=0.8\linewidth]{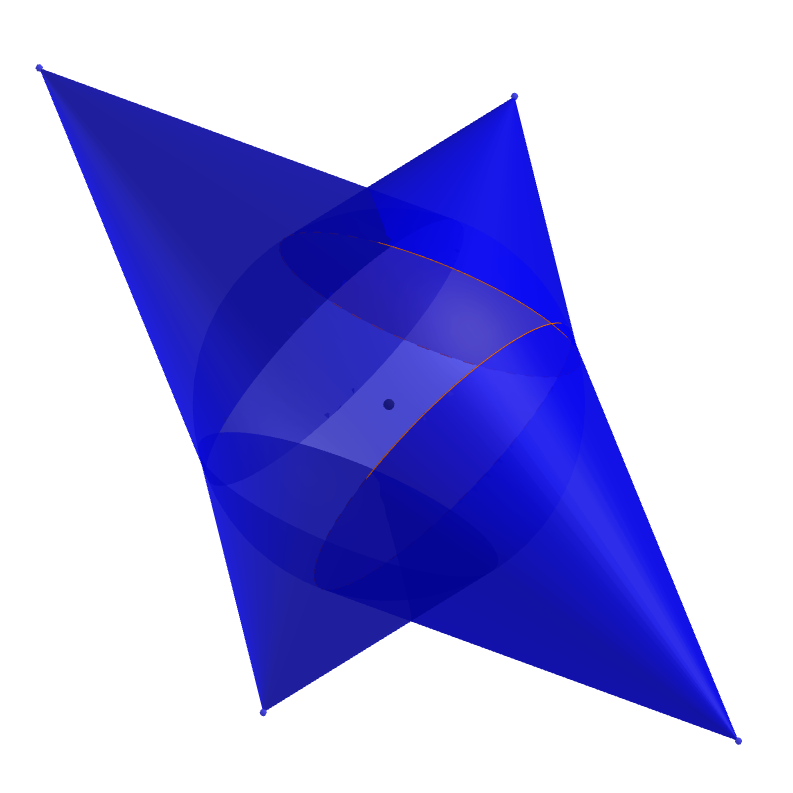}};    
    
\begin{scope}[
x={($0.1*(image.south east)$)},
y={($0.1*(image.north west)$)}];
        
    \node[left] at (1,9.5){\large $\x_1$};    
    \node[below left] at (4,1){\large $\x_2$};    
    \node[below right] at (9.25,1){\large $-\x_1$};
    \node[above right] at (6.5,8.5){\large $-\x_2$};
    \node[above right] at (4.75,5){\large $\oo$};

\end{scope}
\end{tikzpicture}
  \caption{An $\oo$-symmetric non-convex spiky ball.}
  
\end{subfigure}
\caption{Centrally symmetric spiky balls.}
\label{fig:test}
\end{figure}

\begin{definition}\label{covering number}
If $0<\alpha\leq\frac{\pi}{2}$, then let $N_{\Sedm}(\alpha)$ denote the minimum number of closed spherical caps of angular radius $\alpha$ that can cover $\Sedm$.\end{definition}

Our first result upper bounds the illumination numbers of $2$-illuminable spiky balls. We note that spiky balls without being $2$-illuminable can have arbitrarily large illumination numbers.

\begin{theorem}\label{illumination-spiky balls}
Suppose that ${\rm Sp}_{\B^d}[\x_1,\dots ,\x_n]$ is a $2$-illuminable spiky ball with vertices $\x_1,\dots ,\x_n$ in $\Ee^d$.
\begin{itemize}
\item[(i)] If $d=2$, then $I({\rm Sp}_{\B^2}[\x_1,\dots ,\x_n])=3$.
\item[(ii)] If $d=3$, then $I({\rm Sp}_{\B^3}[\x_1,\dots ,\x_n])\leq 5$.
\item[(iii)] If $d\geq 4$, then $I({\rm Sp}_{\B^d}[\x_1,\dots ,\x_n])\leq 3+N_{{\mathbb S}^{d-2}}(\frac{\pi}{6})$.
\end{itemize}
\end{theorem}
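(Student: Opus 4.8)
Put $\uu_i:=-\x_i/\|\x_i\|$ and let $\alpha_i\in(0,\tfrac{\pi}{2})$ be defined by $\sin\alpha_i=1/\|\x_i\|$; thus $E_i:=C_{\mathbb{S}^{d-1}}(\uu_i,\alpha_i)$ is exactly the set of directions pointing from $\x_i$ into the cone ${\rm conv}(\B^d\cup\{\x_i\})$, and $\alpha_i<\tfrac\pi2$ because $\|\x_i\|>1$. The proof rests on two elementary observations. First, a direction $\vv$ illuminates the vertex $\x_i$ if and only if $\vv\in E_i$, and if $\vv$ illuminates $\x_i$ then $\vv$ illuminates \emph{every} boundary point of ${\rm Sp}:={\rm Sp}_{\B^d}[\x_1,\dots,\x_n]$ lying on the lateral surface of the $i$-th spike, since the ray issued from such a point in direction $\vv$ enters ${\rm int}\,{\rm conv}(\B^d\cup\{\x_i\})\subseteq{\rm int}\,{\rm Sp}$. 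Second, a direction $\vv$ illuminates a boundary point $\z\in\mathbb{S}^{d-1}$ of ${\rm Sp}$ if and only if $\langle\vv,\z\rangle<0$. Consequently it suffices to produce at most $3+N_{\mathbb{S}^{d-2}}(\tfrac\pi6)$ directions that \emph{pierce} every cap $E_1,\dots,E_n$ and also illuminate every boundary point of ${\rm Sp}$ on $\mathbb{S}^{d-1}$; and the hypothesis that ${\rm Sp}$ is $2$-illuminable says exactly that $E_i\cap E_j\neq\emptyset$ for all $i,j$, i.e.\ ${\rm dist}(\uu_i,\uu_j)<\alpha_i+\alpha_j$.

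\textbf{Construction for (iii).} Let $\x_k$ have smallest cap-angle, $\beta:=\min_i\alpha_i=\alpha_k$, and set $\uu:=\uu_k$. From $E_i\cap E_k\neq\emptyset$ one gets ${\rm dist}(\uu_i,\uu)<\alpha_i+\beta\le 2\alpha_i$, whence the point at arc-distance $\le\beta$ from $\uu$ towards $\uu_i$ lies in $E_i$; so every $E_i$ meets the small cap $C_{\mathbb{S}^{d-1}}[\uu,\beta]$. I would take as piercing set $\uu$ together with, for each member of a covering of the unit tangent sphere $\mathbb{S}^{d-2}$ at $\uu$ by $N:=N_{\mathbb{S}^{d-2}}(\tfrac\pi6)$ caps of angular radius $\tfrac\pi6$, one further direction placed on the geodesic ray from $\uu$ in the corresponding direction at a distance to be optimized. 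These $1+N$ directions should pierce all $E_i$: a ``fat'' spike (one with $\alpha_i$ not comparable to $\beta$) is caught because a $\tfrac\pi6$ angular error is absorbed by the large radius $\alpha_i$, which is a spherical law-of-cosines estimate in which $\pi/6$ is the critical angle; and the ``thin'' spikes are caught because $2$-illuminability, applied to all pairs (in particular those containing $\x_k$), forces their vectors $\uu_i$ into a cluster so tight — and with tangent directions at $\uu$ so nearly parallel — that a bounded subfamily of the chosen directions already pierces them. Two more directions are then appended so that the full list of $\le3+N$ directions also illuminates $\partial{\rm Sp}\cap\mathbb{S}^{d-1}$: the piercing directions already illuminate $\mathbb{S}^{d-1}$ near the $\uu_i$, while the remaining (``unspiked'') region is, relative to the axis $\uu$, contained in a neighbourhood of a closed hemisphere, which a couple of directions handle. (In the opposite regime, where the $\uu_i$ are so clustered that this last step fails, ${\rm Sp}$ is illuminated instead by a positive basis of $\le d+1$ directions together with the $O(2^{d-1})$ directions needed for the clustered apexes, and for $d\ge4$ this is $\le3+N_{\mathbb{S}^{d-2}}(\tfrac\pi6)$.)

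\textbf{The crux.} The one genuinely difficult point is the thin spikes: a spike of cap-angle $\alpha_i$ is illuminable only from directions within angle $\alpha_i$ of $\uu_i$, so a priori many thin spikes aimed in many directions would demand many directions. Everything depends on extracting from $E_i\cap E_j\neq\emptyset$ the quantitative fact that the thin spikes are confined to a cluster whose ``longitudinal'' spread at $\uu$ (i.e.\ in $\mathbb{S}^{d-2}$) fits inside one $\tfrac\pi6$-cap — or else is handled by $O(1)$ directions — and on checking that enlarging each chosen direction to a whole $\tfrac\pi6$-cap's worth keeps it inside every $E_i$ it is meant to pierce. Calibrating the single constant $\tfrac\pi6$ so that the fat-spike estimate and the thin-spike clustering are simultaneously valid, together with the bookkeeping of which of the $\le3+N$ directions accounts for which part of $\partial{\rm Sp}$ (the two polar caps and the equatorial belt of $\mathbb{S}^{d-1}$, and the polar versus non-polar spikes), is the delicate part; it is also what forces $d\ge4$, since for $d\le3$ the relevant belt is $0$- or $1$-dimensional and one argues directly.

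\textbf{Parts (i) and (ii).} The lower bound $I\ge3$ in (i) uses no more than the structure of a spiky ball: given two directions $\vv_1,\vv_2$, the ``blind wedge'' $W=\{\mathbf{n}:\langle\vv_1,\mathbf{n}\rangle\ge0,\ \langle\vv_2,\mathbf{n}\rangle\ge0\}$ is nonempty because two vectors cannot positively span $\mathbb{R}^2$, and any unit $\mathbf{n}_0\in W$ is the outer normal of ${\rm Sp}$ at some convex-type boundary point — either a vertex $\x_j$ (whose normal cone is a spherical cap of angular radius $\tfrac\pi2-\alpha_j$, and so may contain $\mathbf{n}_0$) or, if no such $\x_j$, the sphere point $\mathbf{n}_0$ itself — and that boundary point is illuminated by neither $\vv_1$ nor $\vv_2$. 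For $I\le3$ one uses that the $E_i$, now pairwise-intersecting arcs of $\mathbb{S}^1$ shorter than a semicircle, are pierceable by at most $2$ directions, to which a third direction sweeping the remaining arcs of $\mathbb{S}^1$ is added. Part (ii) is the $\mathbb{S}^2$ instance of the construction in (iii): the covering term is replaced by the (bounded) piercing number of pairwise-intersecting caps of radius $<\pi/2$ on $\mathbb{S}^2$, the sphere part of $\partial{\rm Sp}$ costs a constant number of directions, and tracking the constants yields $5$.
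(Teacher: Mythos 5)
Your reduction is the right one and matches the paper's Lemma on piercing the caps $E_i=C_{{\mathbb S}^{d-1}}(-\y_i,\tfrac{\pi}{2}-\alpha_i)$; but the heart of part (iii) --- producing $2+N_{{\mathbb S}^{d-2}}(\tfrac{\pi}{6})$ points that pierce every member of a pairwise-intersecting family of open caps of radius $<\tfrac{\pi}{2}$ --- is exactly what you have not proved. Your construction (one point $\uu$ at the center of the smallest cap, plus one point per $\tfrac{\pi}{6}$-cap of a covering of the tangent sphere, ``at a distance to be optimized'') is left as a program: you yourself flag the thin-spike clustering and the calibration of $\tfrac{\pi}{6}$ as ``the crux'' and ``the delicate part'' without carrying them out. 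Worse, the fat-spike step fails as stated: if $\p$ is at distance $t$ from $\uu$ in a tangent direction making angle $\tfrac{\pi}{6}$ with the direction of $\uu_i$, and $\uu_i$ is at distance $\alpha_i+\beta$ from $\uu$, the spherical law of cosines gives $\cos\,{\rm dist}(\p,\uu_i)=\cos t\cos(\alpha_i+\beta)+\tfrac{\sqrt3}{2}\sin t\sin(\alpha_i+\beta)$, which at the natural choice $t=\beta$ is \emph{strictly less} than $\cos\alpha_i$; so a $\tfrac{\pi}{6}$ angular error is not ``absorbed'' and a genuinely different argument is needed. The paper does not attempt such a direct construction: it invokes Danzer's Gallai-type estimate $G(2,\B^{d-1})\leq 1+N_{{\mathbb S}^{d-2}}(\tfrac{\pi}{6})$ for pairwise intersecting Euclidean balls, picks one piercing point $\mathbf{s}$, stereographically projects the caps missing $\mathbf{s}$ onto a tangent hyperplane where they become pairwise intersecting balls, and pierces those with $1+N_{{\mathbb S}^{d-2}}(\tfrac{\pi}{6})$ points. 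Without that (nontrivial, cited) input, or a completed substitute, part (iii) is not proved. The same issue affects part (ii): the bound $5$ requires knowing that four points always pierce a pairwise-intersecting family of caps on ${\mathbb S}^2$ (Danzer's theorem); your ``(bounded) piercing number \dots tracking the constants yields $5$'' does not supply this.

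Two smaller points. For the spherical part of the boundary you should use the clean observation the paper relies on: a set of directions with positive hull equal to $\Ee^d$ illuminates the spiky ball iff it illuminates the vertices, so one extra (perturbed) direction suffices after piercing the open caps; your claim that the unspiked part of ${\mathbb S}^{d-1}$ lies ``in a neighbourhood of a closed hemisphere'' relative to $\uu$ is unjustified (the apex directions $\y_i=-\uu_i$ may lie almost anywhere, since ${\rm dist}(\uu_i,\uu)$ can be close to $\pi$). In part (i) your lower-bound argument has a gap in the case where the blind normal $\mathbf{n}_0$ lies in $C_{{\mathbb S}^1}(\y_j,\alpha_j)\setminus C_{{\mathbb S}^1}[\y_j,\tfrac{\pi}{2}-\alpha_j]$ (possible when $\alpha_j>\tfrac{\pi}{4}$), where $\mathbf{n}_0$ is neither a boundary point of the spiky ball nor in the normal cone of $\x_j$; this is repairable but needs an argument.
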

\begin{corollary}\label{estimates-spiky balls}
Let ${\rm Sp}_{\B^d}[\x_1,\dots ,\x_n]$ be a $2$-illuminable spiky ball with vertices $\x_1,\dots ,\x_n$ in $\Ee^d, d\geq 4$. If $d=4$, then $I({\rm Sp}_{\B^4}[\x_1,\dots ,\x_n])\leq 23$. If $d\geq 5$, then $$I({\rm Sp}_{\B^d}[\x_1,\dots ,\x_n])\leq 3+2^{d-2}\sqrt{2\pi(d-1)}\left(\frac{1}{2}+\frac{2\ln\ln(d-2)}{\ln(d-2)}+\frac{5}{\ln(d-2)}\right)(d-2)\ln(d-2)<2^{d+1}d^{\frac{3}{2}}\ln d.$$ 
\end{corollary}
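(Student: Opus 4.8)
The plan is to read the corollary off from part (iii) of Theorem~\ref{illumination-spiky balls}, which reduces everything to upper bounds on the spherical covering number $N_{{\mathbb S}^{d-2}}(\tfrac{\pi}{6})$. Since ${\rm Sp}_{\B^d}[\x_1,\dots ,\x_n]$ is $2$-illuminable and $d\geq 4$, Theorem~\ref{illumination-spiky balls}(iii) gives
\[
I\!\left({\rm Sp}_{\B^d}[\x_1,\dots ,\x_n]\right)\leq 3+N_{{\mathbb S}^{d-2}}\!\left(\tfrac{\pi}{6}\right),
\]
so it only remains to estimate $N_{{\mathbb S}^{d-2}}(\tfrac{\pi}{6})$ and simplify. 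For $d=4$ one has ${\mathbb S}^{d-2}={\mathbb S}^2$, and I would use the known bound $N_{{\mathbb S}^2}(\tfrac{\pi}{6})\leq 20$ (obtained from an explicit economical covering of the $2$-sphere by $20$ congruent caps, e.g.\ two polar caps together with two suitably placed latitudinal rings of caps), which immediately gives $I({\rm Sp}_{\B^4}[\x_1,\dots ,\x_n])\leq 3+20=23$.

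For $d\geq 5$ the sphere ${\mathbb S}^{d-2}$ lies in $\Ee^{d-1}$ and has dimension at least $3$, so I would invoke a Rogers-type upper bound for covering a sphere by equal caps, expressed in terms of $\sin\alpha$ and of the ambient dimension. With $\sin\tfrac{\pi}{6}=\tfrac12$ --- which is exactly where the factor $2^{d-2}$ comes from --- this produces
\[
N_{{\mathbb S}^{d-2}}\!\left(\tfrac{\pi}{6}\right)\leq 2^{d-2}\sqrt{2\pi(d-1)}\left(\tfrac12+\tfrac{2\ln\ln(d-2)}{\ln(d-2)}+\tfrac{5}{\ln(d-2)}\right)(d-2)\ln(d-2),
\]
and adding $3$ yields the first bound of the corollary. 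The hypothesis $d\geq 5$ is used precisely to keep $\ln\ln(d-2)>0$, which is why $d=4$ must be treated separately.

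It then remains to check the clean estimate $I({\rm Sp}_{\B^d}[\x_1,\dots ,\x_n])<2^{d+1}d^{3/2}\ln d$. Using the identity $\left(\tfrac12+\tfrac{2\ln\ln(d-2)}{\ln(d-2)}+\tfrac{5}{\ln(d-2)}\right)\ln(d-2)=\tfrac12\ln(d-2)+2\ln\ln(d-2)+5$, then dividing through by $2^{d-2}$ and rearranging, the second inequality of the corollary amounts to
\[
\tfrac12\ln(d-2)+2\ln\ln(d-2)+5+\varepsilon_d<\frac{8}{\sqrt{2\pi}}\cdot\frac{d^{3/2}\ln d}{\sqrt{d-1}\,(d-2)},
\]
where $\varepsilon_d:=\dfrac{3\cdot 2^{2-d}}{\sqrt{2\pi(d-1)}\,(d-2)}<\tfrac{1}{20}$. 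Since $\dfrac{d^{3/2}}{\sqrt{d-1}\,(d-2)}\geq 1$ for every $d\geq 2$ --- squaring turns this into $5d^2-8d+4\geq 0$, which has negative discriminant --- it is enough, for large $d$, to observe that $\tfrac12\ln(d-2)+2\ln\ln(d-2)+5+\tfrac{1}{20}<\tfrac{8}{\sqrt{2\pi}}\ln d$ (with $\tfrac{8}{\sqrt{2\pi}}\approx 3.19$), which holds as soon as $\ln d$ is large enough by a one-line monotonicity argument; the finitely many remaining small values of $d$, where the factor $\tfrac{d^{3/2}}{\sqrt{d-1}\,(d-2)}$ is strictly larger than $1$ and keeps the inequality true, are then dispatched by direct substitution.

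The only real work in all of this is the last elementary verification (a handful of numerical checks together with a monotonicity argument); there is no conceptual obstacle, since the whole structural content has already been absorbed into Theorem~\ref{illumination-spiky balls} and into the classical spherical covering estimate. If I had to point to the one place that needs genuine care, it is making sure the Rogers-type sphere-covering bound is invoked correctly in the smallest relevant dimension $d=5$ (where the margin in the final inequality is tightest) and that the separate input $N_{{\mathbb S}^2}(\tfrac{\pi}{6})\leq 20$ for $d=4$ is properly justified.
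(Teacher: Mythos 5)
Your proposal follows the paper's proof essentially verbatim: Theorem~\ref{illumination-spiky balls}(iii) together with the Tarnai--G\'asp\'ar covering $N_{\mathbb{S}^{2}}(\pi/6)\leq 20$ for $d=4$, and for $d\geq 5$ Dumer's sphere-covering bound combined with the cap-measure estimate $\Omega_{d-2}(\pi/6)>1/\bigl(2^{d-2}\sqrt{2\pi(d-1)}\bigr)$ --- which is precisely your ``Rogers-type bound with $\sin(\pi/6)=\tfrac12$'' and is where the paper, too, gets the factor $2^{d-2}$. The only divergence is the final inequality $<2^{d+1}d^{3/2}\ln d$, which the paper justifies merely by displaying the graph of the ratio function, whereas your explicit rearrangement plus monotonicity check (with the small cases dispatched numerically) is, if anything, more rigorous.
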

\begin{remark}
Note that an arbitrary spiky ball ${\rm Sp}_{\B^d}[\x_1,\dots ,\x_n]$ is starshaped with respect to $\oo$ and even if ${\rm Sp}_{\B^d}[\x_1,\dots ,\x_n]$ is $2$-illuminable it is not necessarily a convex set. Still, one may wonder whether any $d$-dimensional $2$-illuminable spiky ball can be illuminated by less than $2^d$ directions in $\Ee^d$.
\end{remark}

\begin{remark}\label{construction1}
There exists a $2$-illuminable spiky ball ${\rm Sp}_{\B^3}[\x_1,\dots ,\x_{10}]$ in $\Ee^3$ with $I({\rm Sp}_{\B^3}[\x_1,\dots ,\x_{10}])=5$. Furthermore, there exists $d_0$ such that for any $d\geq d_0$ one possesses a $2$-illuminable spiky ball ${\rm Sp}_{\B^d}[\x_1,\dots ,\x_n]$ in $\Ee^d$ with $I({\rm Sp}_{\B^d}[\x_1,\dots ,\x_{n}])>1.0645^{d-1}$. 
\end{remark}

Before we state our main result let us recall the following very interesting theorem of Nasz\'odi \cite{Na}: Let $1<D< 1.116$. Then for any sufficiently large dimension $d$ there exists a centrally symmetric cap body $\K$ such that $I(\K)\geq 0.05D^d$ and $\frac{1}{D}\B^d\subset\K\subset \B^d$. This raises the natural question whether Conjecture~\ref{illumination conjecture} holds for centrally symmetric cap bodies in sufficiently large dimensions. We give a positive answer this question as follows.

\begin{theorem}\label{illumination-cap bodies}
Let ${\rm Sp}_{\B^d}[\pm\x_1,\dots ,\pm\x_n]$ be an $\oo$-symmetric cap body with vertices $\pm\x_1,\dots ,\pm\x_n$ in $\Ee^d$, $d\geq 3$. Then 
$$I({\rm Sp}_{\B^d}[\pm\x_1,\dots ,\pm\x_n])\leq 2+N_{{\mathbb S}^{d-2}}\left(\frac{\pi}{4}\right).$$
\end{theorem}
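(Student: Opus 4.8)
\textit{Proof sketch.} The plan is to split $\partial K$ into its spherical part and the spikes of the genuine vertices, to extract a quantitative inequality from convexity by playing a longest spike against all the others, and then to exhibit $2+N_{{\mathbb S}^{d-2}}\!\left(\tfrac{\pi}{4}\right)$ explicit directions doing the job.

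\textit{Reduction.} First I would record how illumination works here. If $\x\in\Sedm\cap\partial K$ — whether $\x$ is exposed or lies on the rim of a spike — then $N_K(\x)\subseteq N_{\B^d}(\x)=\{\x\}$, so $\x$ is illuminated by $\vv$ exactly when $\iprod{\vv}{\x}<0$. For a genuine vertex $\x_j$, write $\uu_j=\x_j/\|\x_j\|$ and $\beta_j=\arcsin(1/\|\x_j\|)\in(0,\tfrac{\pi}{2})$; the Gauss image of the spike ${\rm conv}(\B^d\cup\{\x_j\})$ (ball cap together with lateral cone) is precisely $\capp{\uu_j}{\tfrac{\pi}{2}-\beta_j}$, so at every boundary point $\x$ of $K$ lying on this spike one has $N_K(\x)\subseteq\capp{\uu_j}{\tfrac{\pi}{2}-\beta_j}$. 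Since $\vv\in\copp{-\uu_j}{\beta_j}$ means exactly that $\iprod{\vv}{\uu}<0$ for all $\uu\in\capp{\uu_j}{\tfrac{\pi}{2}-\beta_j}$, any such $\vv$ illuminates the whole of $\partial K$ coming from the spike of $\x_j$. As every point of $\partial K$ lies either on $\Sedm$ or on the spike of a genuine vertex (a spike absorbed by another is contained in it), it suffices to choose directions so that (i) each genuine vertex $\x_j$ has a chosen direction in $\copp{-\uu_j}{\beta_j}$, and (ii) each $\x\in\Sedm\cap\partial K$ has a chosen direction $\vv$ with $\iprod{\vv}{\x}<0$.

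\textit{The key inequality.} Fix a genuine vertex $\x_1$ of maximal norm $R$, set $\uu_1=\ee_1$ and $\beta_1=\arcsin(1/R)$. For any vertex $\x_i$, the segment $[\x_1,\x_i]\subset K$ is covered by the spikes, but near $\x_1$ it can lie only in ${\rm conv}(\B^d\cup\{\x_1\})$, since $\x_1$, being genuine, lies in no other (closed) spike; hence $\x_i-\x_1$ belongs to the tangent cone of ${\rm conv}(\B^d\cup\{\x_1\})$ at $\x_1$, a circular cone of half-angle $\beta_1$ about $-\ee_1$. Writing $\omega_i:=l(\widehat{\ee_1\uu_i})$, computing the angle of $\x_i-\x_1$ from $-\ee_1$ and rearranging gives $\|\x_i\|\sin(\omega_i+\beta_1)\le 1$; running the same argument with $-\x_1$ — also a longest vertex, by central symmetry, which is where the hypothesis enters — gives $\|\x_i\|\sin(\pi-\omega_i+\beta_1)\le 1$. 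Since $\|\x_i\|\sin\beta_i=1$ with $\beta_i=\arcsin(1/\|\x_i\|)$, these say $\sin(\omega_i+\beta_1)\le\sin\beta_i$ and $\sin(\pi-\omega_i+\beta_1)\le\sin\beta_i$; reading off which half of $[0,\pi]$ the arguments lie in, each vertex $\x_i$ satisfies one of: $\omega_i\le\beta_i-\beta_1$ (then $-\ee_1$ illuminates its spike), $\pi-\omega_i\le\beta_i-\beta_1$ (then $\ee_1$ does), or $\pi-\beta_1-\beta_i\le\omega_i\le\beta_1+\beta_i$. In the last, ``equatorial'', case $\beta_1+\beta_i\ge\tfrac{\pi}{2}$, hence $\beta_i\ge\max(\beta_1,\tfrac{\pi}{2}-\beta_1)\ge\tfrac{\pi}{4}$, and $-\uu_i$ lies within $\delta_i:=|\omega_i-\tfrac{\pi}{2}|\le\beta_1+\beta_i-\tfrac{\pi}{2}$ of the equatorial subsphere $\mathbb{S}^{d-2}:=\ee_1^{\perp}\cap\Sedm$.

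\textit{The directions.} Let $\z_1,\dots,\z_N$, $N=N_{{\mathbb S}^{d-2}}\!\left(\tfrac{\pi}{4}\right)$, be the centres of an optimal covering of $\mathbb{S}^{d-2}$ by closed caps of radius $\tfrac{\pi}{4}$, after a generic rotation (which does not change $N$). I claim $\{\ee_1,-\ee_1,\z_1,\dots,\z_N\}$, that is $2+N_{{\mathbb S}^{d-2}}\!\left(\tfrac{\pi}{4}\right)$ directions, illuminates $K$. For (ii): if $\x\in\Sedm\cap\partial K$ has $\iprod{\ee_1}{\x}\ne 0$ then $\mp\ee_1$ works; if $\iprod{\ee_1}{\x}=0$ then $\x\in\mathbb{S}^{d-2}$, and the $\z_l$ cannot all lie in one closed halfspace through $\oo$ — otherwise every cap of the covering would sit inside $\{\iprod{\cdot}{\w}\ge-\tfrac{1}{\sqrt{2}}\}$, missing $-\w$ — so $\iprod{\z_l}{\x}<0$ for some $l$. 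For (i): the $\pm\ee_1$-vertices are done; for an equatorial vertex $\x_i$, let $\q_i\in\mathbb{S}^{d-2}$ be the nearest point to $-\uu_i$ (so $l(\widehat{-\uu_i,\q_i})=\delta_i$) and take $\z_l$ with $l(\widehat{\z_l\q_i})<\tfrac{\pi}{4}$. The spherical Pythagorean theorem then gives
\[
\cos l(\widehat{\z_l,-\uu_i})=\cos l(\widehat{\z_l\q_i})\cos\delta_i\ >\ \tfrac{1}{\sqrt 2}\cos\delta_i\ \ge\ \tfrac{1}{\sqrt 2}\sin(\beta_1+\beta_i)\ \ge\ \cos\beta_i,
\]
where the middle step uses $\delta_i\le\beta_1+\beta_i-\tfrac{\pi}{2}<\tfrac{\pi}{2}$ and the last is $\sqrt 2\cos\beta_i\le\sin(\beta_1+\beta_i)$, i.e. $\tan\beta_i\ge\frac{\sqrt 2-\sin\beta_1}{\cos\beta_1}$, which holds because $\tan\beta_i\ge\max(\tan\beta_1,\cot\beta_1)$ and $\tan\beta_1\ge\frac{\sqrt 2-\sin\beta_1}{\cos\beta_1}\iff\beta_1\ge\tfrac{\pi}{4}$ while $\cot\beta_1\ge\frac{\sqrt 2-\sin\beta_1}{\cos\beta_1}\iff\beta_1\le\tfrac{\pi}{4}$. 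Thus $\z_l\in\copp{-\uu_i}{\beta_i}$ and illuminates the spike of $\x_i$. (If $K$ has no genuine vertex it equals $\B^d$, and $I(\B^d)=d+1\le 2+N_{{\mathbb S}^{d-2}}\!\left(\tfrac{\pi}{4}\right)$, since $d-2$ caps whose centres span a hyperplane leave its normal uncovered, so $N_{{\mathbb S}^{d-2}}\!\left(\tfrac{\pi}{4}\right)\ge d-1$.) The main obstacle is exactly the equatorial case of (i): one must balance the a priori unknown latitude $\delta_i$ of these vertices against the lower bound on $\beta_i$ produced by the previous step, and it is the spherical Pythagorean identity that makes this work and pins the covering radius at $\tfrac{\pi}{4}$; the only loss is a handful of degenerate equalities, possible solely when $R=\sqrt 2$, which are excluded by the generic rotation of the covering.
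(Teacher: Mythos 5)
Your proposal is correct and follows essentially the same route as the paper: both proofs take the two antipodal directions along the farthest vertex (equivalently, the smallest illuminating cap) together with the centres of a $\tfrac{\pi}{4}$-covering of the equatorial $(d-2)$-sphere orthogonal to it, and both justify this with the same spherical right-triangle identity (your ``spherical Pythagorean theorem'' is exactly the Napier relation $\cos\beta=\cos(\beta_1+\beta-\tfrac{\pi}{2})\cos\gamma$ in the paper's Sublemma). The only real difference is bookkeeping: you extract the constraint between a vertex and the farthest vertex from the tangent cone of $K$ at that farthest vertex, whereas the paper first proves that the $\oo$-symmetric packing of vertex caps forces all the dual (illuminating) caps to pairwise intersect and then pierces such a family in general; your weaker one-versus-one constraint leaves open the ``polar'' cases (a) and (b), which are in fact vacuous for $i\neq 1$ but which you correctly dispose of with $\pm\ee_1$ anyway.
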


\begin{corollary}\label{estimates-cap bodies}
Any $3$-dimensional centrally symmetric cap body can be illuminated by $6\ (<2^3)$ directions in $\Ee^3$. (This is not a new result. It was proved via a dual method in \cite{IvSt}.) On the other hand, any $4$-dimensional centrally symmetric cap body can be illuminated by $12\ (<2^4)$ directions in $\Ee^4$. Moreover, if ${\rm Sp}_{\B^d}[\pm\x_1,\dots ,\pm\x_n]$ is an $\oo$-symmetric cap body with vertices $\pm\x_1,\dots ,\pm\x_n$ in $\Ee^d, d\geq 20$, then
$$I({\rm Sp}_{\B^d}[\pm\x_1,\dots ,\pm\x_n])\leq 2+2^{\frac{d-2}{2}}\sqrt{2\pi(d-1)}\left(\frac{1}{2}+\frac{2\ln\ln(d-2)}{\ln(d-2)}+\frac{5}{\ln(d-2)}\right)(d-2)\ln(d-2)<2^d.$$ \end{corollary}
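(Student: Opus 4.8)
The plan is to derive all three assertions from Theorem~\ref{illumination-cap bodies}, which gives $I({\rm Sp}_{\B^d}[\pm\x_1,\dots,\pm\x_n])\le 2+N_{\mathbb{S}^{d-2}}(\pi/4)$; everything then reduces to estimating the spherical covering number $N_{\mathbb{S}^{d-2}}(\pi/4)$ in each of the three dimension ranges.

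For $d=3$ the relevant sphere is the circle $\mathbb{S}^1$, on which a closed cap of angular radius $\pi/4$ is an arc of length $\pi/2$; the four arcs centred at the points of angular argument $0,\tfrac{\pi}{2},\pi,\tfrac{3\pi}{2}$ cover $\mathbb{S}^1$, while three arcs cannot (total length $\tfrac{3\pi}{2}<2\pi$), so $N_{\mathbb{S}^1}(\pi/4)=4$ and $I\le 6<2^3$, recovering the result of \cite{IvSt}. For $d=4$ one needs $N_{\mathbb{S}^2}(\pi/4)$: here I would use a covering of the $2$-sphere by ten closed caps of angular radius $\pi/4$, which exists — indeed the minimal number of equal caps of angular radius $\pi/4$ covering $\mathbb{S}^2$ is $10$, the optimal ten-cap covering having covering radius below $45^\circ$ — so that $N_{\mathbb{S}^2}(\pi/4)\le 10$ and $I\le 12<2^4$.

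For $d\ge 20$ I would substitute into Theorem~\ref{illumination-cap bodies} the quantitative spherical covering estimate
$$N_{\mathbb{S}^{d-2}}\!\left(\frac{\pi}{4}\right)\le\frac{1}{\sin^{d-2}(\pi/4)}\,\sqrt{2\pi(d-1)}\left(\frac{1}{2}+\frac{2\ln\ln(d-2)}{\ln(d-2)}+\frac{5}{\ln(d-2)}\right)(d-2)\ln(d-2),$$
namely the same bound — a standard transfer to the sphere of Rogers' covering-density estimate \cite{Ro}, cf. \cite{Na} and \cite{RoZo} — that underlies Corollary~\ref{estimates-spiky balls}, now applied with angular radius $\pi/4$ in place of $\pi/6$; since $\sin(\pi/4)=1/\sqrt{2}$ the prefactor $\sin^{-(d-2)}(\pi/4)$ equals $2^{(d-2)/2}$, which is exactly the middle expression in the statement. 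It then remains to verify the closing inequality
$$2+2^{(d-2)/2}\sqrt{2\pi(d-1)}\left(\frac{1}{2}+\frac{2\ln\ln(d-2)}{\ln(d-2)}+\frac{5}{\ln(d-2)}\right)(d-2)\ln(d-2)<2^d$$
for every $d\ge 20$. Dividing by $2^{(d-2)/2}$ turns the left-hand side into a quantity of order $d^{3/2}\ln d$ and the right-hand side into $2^{(d+2)/2}$, so the inequality certainly holds for all large $d$; the only delicate point is that the cut-off is exactly $d=20$, which I would settle by evaluating the ratio of the two sides at $d=20$ (where it is below $1$) and at $d=19$ (where it exceeds $1$), together with a short monotonicity check for $d\ge 20$ — the polynomial-times-polylog factor grows by well under the factor $\sqrt{2}$ for each unit increase of $d$ once $d\ge 20$.

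The main obstacle is therefore entirely in supplying the covering-number inputs rather than in the illumination argument itself: producing and rigorously justifying a ten-cap covering of $\mathbb{S}^2$ for the case $d=4$, and, for $d\ge 20$, having the spherical covering estimate with precisely the constants displayed in the statement and then carrying out the elementary but careful numerical check that pins the threshold at $20$.
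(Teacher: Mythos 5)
Your proposal follows essentially the same route as the paper: all three claims are read off from Theorem~\ref{illumination-cap bodies} via bounds on $N_{{\mathbb S}^{d-2}}(\pi/4)$, namely $N_{{\mathbb S}^{1}}(\pi/4)=4$, the Tarnai--G\'asp\'ar ten-cap covering of ${\mathbb S}^2$, and for $d\geq 20$ the Dumer-type covering bound combined with the cap-measure estimate $\Omega_{d-2}(\pi/4)>2^{-(d-2)/2}(2\pi(d-1))^{-1/2}$ (your $\sin^{-(d-2)}(\pi/4)$ prefactor is exactly this), followed by a numerical check of the final inequality at the threshold $d=20$. The only cosmetic differences are that the paper cites Dumer rather than Rogers for the spherical covering estimate and only needs $N_{{\mathbb S}^2}(\pi/4)\leq 10$ (it records $9\leq N_{{\mathbb S}^2}(\pi/4)\leq 10$ rather than your stronger claim that the minimum equals $10$), neither of which affects correctness.
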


\begin{definition}
The cap body $\K\subset\Ee^d$ is called {\rm $1$-unconditionally symmetric} if it symmetric about each coordinate hyperplane of $\Ee^d$. 
\end{definition}

We close this section with a strengthening of Corollary \ref{estimates-cap bodies} for $1$-unconditionally symmetric cap bodies. Recall that according to \cite{IvSt} if $\K$ is a $1$-unconditionally symmetric cap body in $\Ee^4$, then $I(\K)\leq 8$. 

\begin{theorem}\label{unconditionally symmetric}
Let $\K$ be a $1$-unconditionally symmetric cap body in $\Ee^d, d\geq 5$. Then $I(\K)\leq 4d$.
\end{theorem}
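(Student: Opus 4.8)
The plan is to illuminate $\K = {\rm Sp}_{\B^d}[\pm\x_1,\dots,\pm\x_n]$ by the $2d$ ``coordinate'' directions $\pm\ee_1,\dots,\pm\ee_d$ together with $2d$ further directions reserved for a small, well-structured family of exceptional vertices. I would first record two elementary facts about a cap body ${\rm Sp}_{\B^d}[\y_1,\dots,\y_m]$ and one of its vertices $\y_i$. Since $\y_i$ lies outside every other spike $\conv(\B^d\cup\{\y_j\})$, a neighbourhood of $\y_i$ meets $\K$ only in the single spike $\conv(\B^d\cup\{\y_i\})$; hence \emph{(a)} the tangent cone of $\K$ at $\y_i$ is the infinite ice-cream cone with apex $\y_i$, axis $-\y_i/|\y_i|$ and half-angle $\theta_i:=\arcsin(1/|\y_i|)$, so $\K$ is contained in this cone and a direction $\vv\in\Sedm$ illuminates $\y_i$ exactly when it points strictly inside it, i.e. when $\langle\vv,\y_i\rangle<-\sqrt{|\y_i|^2-1}$. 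Also \emph{(b)} if $\p\in\Sedm\cap\bd\K$ and $\langle\vv,\p\rangle<0$ then $\vv$ illuminates $\p$ (move slightly along $\vv$ into $\inter\B^d\subseteq\inter\K$), and if $\q=(1-t)\p+t\y_i$ with $0\le t<1$ lies on the conical facet over a tangent point $\p$ (so $|\p|=1$, $\langle\p,\y_i\rangle=1$) then the same $\vv$ illuminates $\q$, because $\q+\varepsilon\vv=(1-t)\bigl(\p+\tfrac{\varepsilon}{1-t}\vv\bigr)+t\y_i\in\inter\conv(\B^d\cup\{\y_i\})$ for small $\varepsilon>0$.

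Next I would extract the coordinate structure forced by the $1$-unconditional symmetry. Fix a vertex $\x_i$ and a coordinate $k$ with $k$-th entry $x_{ik}\neq 0$; its reflection $\x_i-2x_{ik}\ee_k$ in $\ee_k^\perp$ lies in $\K$, hence in the tangent cone at $\x_i$ by (a), so the vector $-2x_{ik}\ee_k$ makes angle at most $\theta_i$ with $-\x_i/|\x_i|$, which simplifies to $|x_{ik}|\ge\sqrt{|\x_i|^2-1}$. Thus every nonzero coordinate of every vertex is at least $\sqrt{|\x_i|^2-1}$ in absolute value, and two cases arise: \emph{axis vertices} $\x_i=\pm|\x_i|\ee_k$ with a single nonzero coordinate, and \emph{short vertices} with $m\ge 2$ nonzero coordinates, all of absolute value in $[\sqrt{|\x_i|^2-1},1]$, forcing $|\x_i|^2\le m/(m-1)\le 2$ and $\theta_i\ge\pi/4$. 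Call a short vertex \emph{bad} if its largest entry has absolute value exactly $\sqrt{|\x_i|^2-1}$; then all its $m$ nonzero entries equal $\pm\tfrac1{\sqrt{m-1}}$, so $\x_i=\tfrac1{\sqrt{m-1}}\sum_{k\in S}\epsilon_k\ee_k$ for its support $S$ ($|S|=m$) and a sign vector $\epsilon$, and by (a) it is illuminated by $\vv$ precisely when $\langle\vv,\sum_{k\in S}\epsilon_k\ee_k\rangle<-1$.

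Take now the $2d$ directions $\pm\ee_1,\dots,\pm\ee_d$. If $\p\in\Sedm\cap\bd\K$ then some entry $p_k\neq 0$ and $-\mathrm{sign}(p_k)\,\ee_k$ has negative inner product with $\p$; by (b) these $2d$ directions therefore illuminate the whole spherical part of $\bd\K$ and every conical facet (a conical point lies over such a $\p$). An axis vertex $\pm|\x_i|\ee_k$ is illuminated by $\mp\ee_k$, and a short non-bad vertex whose largest entry is $x_{ik_0}$ (so $|x_{ik_0}|>\sqrt{|\x_i|^2-1}$) is illuminated by $-\mathrm{sign}(x_{ik_0})\,\ee_{k_0}$. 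Hence after these $2d$ directions only the bad vertices remain to be illuminated.

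It remains to illuminate the bad vertices with $2d$ further directions, and here lies the crux. One should observe that a direction $-\tfrac1{\sqrt{|T|}}\sum_{k\in T}\eta_k\ee_k$ supported on a coordinate set $T$ illuminates the bad vertex $\tfrac1{\sqrt{|S|-1}}\sum_{k\in S}\epsilon_k\ee_k$ as soon as $\eta$ agrees with $\epsilon$ on sufficiently many coordinates of $S$ (in particular on all of $S$ when $|S|\le|T|<|S|^2$, and on any two coordinates of $S$ when $|T|=2$); so to handle all bad vertices it suffices to pick, for each bad support $S$, a family of such $\eta$'s whose restrictions to $S$ are surjective onto $\{\pm1\}^S$, batching several bad supports into a common $T$ when they are small. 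The key structural input, to be proved from the \emph{convexity} of $\K$ (the union of its spikes being convex), is that the bad supports are scarce and well separated: the size-$2$ bad supports are pairwise disjoint, and the remaining bad supports overlap only mildly and are few enough that the whole family can be partitioned into batches, each on a coordinate set $T$ of bounded size relative to its smallest support, using at most $2|T|$ directions per batch; summing gives at most $2\sum_S|S|\le 2d$ extra directions, and together with the first $2d$ this yields $I(\K)\le 4d$. The main obstacle is exactly this structural claim: pairwise tangent-cone inequalities already give the disjointness of size-$2$ supports and constrain the overlaps of larger ones, but excluding a Steiner-triple-like abundance of size-$3$ bad supports (which would demand $\Omega(d^2)$ directions), and carrying out the finite bookkeeping for supports of sizes $3$ and $4$ where on-support directions alone overshoot the budget, genuinely use that the spike union is convex rather than just the pairwise conditions.
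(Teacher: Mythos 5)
Your first half is sound and matches the content of the paper's reduction: the vertices that survive illumination by $\pm\ee_1,\dots,\pm\ee_d$ are exactly those of the rigid form $\tfrac{1}{\sqrt{k-1}}\sum_{j\in S}\epsilon_j\ee_j$ with $|S|=k\ge 2$ (the paper calls the corresponding caps $k$-spanning and proves the same rigidity from the packing of the caps $C_{{\mathbb S}^{d-1}}[\pm\y_i,\alpha_i]$, which is equivalent to your tangent-cone inequality applied to a vertex and its coordinate reflection). The gap is entirely in the second half, and you have correctly located it but not closed it --- and the route you sketch cannot be closed as stated. The real difficulty is not that bad \emph{supports} could be abundant; it is that a single bad support can carry exponentially many bad \emph{vertices}. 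Concretely, $\K={\rm conv}\bigl(\B^d\cup\tfrac{1}{\sqrt{d-1}}\{-1,1\}^d\bigr)$ is a $1$-unconditionally symmetric cap body: the segment joining two outer points differing in $j\ge 1$ signs has midpoint of norm $\sqrt{(d-j)/(d-1)}\le 1$, so it meets $\B^d$ and the union of spikes is convex, and one checks each outer point lies outside every other spike. All $2^d$ of its vertices are bad with the single support $S=\{1,\dots,d\}$. Your prescription --- for each bad support, directions whose sign patterns agree with each $\epsilon$ on more than $(|S\cap T|+\sqrt{|T|})/2$ coordinates of $S$ --- becomes a covering-code problem on $\{\pm1\}^d$ for this example, which the ``scarce and well-separated supports'' claim does not address; so the structural claim you flag as the main obstacle is both unproven and aimed at the wrong quantity.

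The paper sidesteps all of this with a different idea: it reserves no directions for the bad vertices at all, but replaces the $2d$ coordinate directions by $4d$ perturbations $\pm\uu_j,\pm\vv_j$, where $\uu_j$ (resp.\ $\vv_j$) has $\cos\varphi$ in slot $j$ and $\sin\varphi/\sqrt{d-1}$ (resp.\ $-\sin\varphi/\sqrt{d-1}$) in every other slot, for a small $\varphi>0$. For a bad vertex with sign vector $(s_1,\dots,s_d)\in\{0,\pm1\}^d$ and $s=\sum_j s_j$ one gets $\langle\uu_j,\sqrt{k}\y\rangle=s_j\cos\varphi+(s-s_j)\sin\varphi/\sqrt{d-1}$, and similarly with a minus sign for $\vv_j$. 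Since the coordinate vectors sit \emph{exactly} on the boundary of the cap assigned to a bad vertex, the first-order term in $\varphi$ of the right sign pushes one of $\pm\uu_j,\pm\vv_j$ strictly inside: choose $j$ with $s_j\neq 0$ and $s_j(s-s_j)\neq 0$, and if no such $j$ exists then $s=\pm1$ and one takes $j$ with $s_j=-s$, which exists because $k\ge 2$ forces mixed signs. This pierces every bad cap --- arbitrarily many, with arbitrary supports and sign patterns --- simultaneously, while for $\varphi$ small the same $4d$ vectors still illuminate everything the unperturbed $\pm\ee_j$ illuminated. Replacing your entire second half by this perturbation argument is what completes the proof.
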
 

While this proves Conjecture~\ref{illumination conjecture} for $1$-unconditionally symmetric cap bodies in dimensions $d\geq 5$, the $4d$ estimate does not seem to be sharp, and, in fact, we propose 

\begin{conjecture}\label{un-symm-2d}
Every $1$-unconditionally symmetric cap body of $\Ed$ can be illuminated by $2d$ directions for all $d\geq 5$. 
\end{conjecture}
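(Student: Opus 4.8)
\emph{Overview and reduction to vertices.} The plan is to reduce Conjecture~\ref{un-symm-2d} to illuminating the vertices of $\K$, and to realize the $2d$ directions as a sign‑symmetric family obtained from $\{\pm\ee_1,\dots,\pm\ee_d\}$ by, in a controlled way, replacing some axis pairs $\{\ee_k,-\ee_k\}$ by ``diagonal'' directions. Write $\K={\rm Sp}_{\B^d}[\cdots]$ over $\B^d=\B^d[\oo,1]$. Since in a cap body the open spherical caps removed from $\Sedm$ are pairwise disjoint, $\bd\K$ is the disjoint union of (i) exposed points of $\Sedm$, (ii) relative–interior points of the conical facets, and (iii) the vertices, and (i)–(ii) are all smooth boundary points of $\K$. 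A point $\x$ of type (i) has outer unit normal $\x$, so it is illuminated in a direction $\vv$ iff $\langle\vv,\x\rangle<0$; hence $-\mathrm{sgn}(x_k)\ee_k$ illuminates it for any $k$ with $x_k\neq0$, so $\{\pm\ee_k\}$ illuminates all of (i). A point of type (ii) lying on the cone over a vertex $\p$ has outer normal one of the touch points $\z\in\Sedm$, $\langle\z,\p\rangle=1$, of that cone, so it is illuminated by every direction illuminating $\p$. Thus everything reduces to illuminating the finitely many vertices.

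\emph{When the axis pairs already suffice.} For a vertex $\p$ with $\|\p\|=r>1$ one has, with $\uu=\p/\|\p\|$ and $\cos\beta=1/r$, that $\vv$ illuminates $\p$ iff $\langle\vv,\z\rangle<0$ for every touch point $\z$ of the cone over $\p$ (equivalently, iff the spherical distance from $\vv$ to $-\uu$ is $<\tfrac\pi2-\beta$). Let $k^{*}$ satisfy $|p_{k^{*}}|=\|\p\|_{\infty}$. A short Cauchy–Schwarz argument then shows $-\mathrm{sgn}(p_{k^{*}})\ee_{k^{*}}$ illuminates $\p$ whenever $\sum_{j\neq k^{*}}p_{j}^{2}<1$; call such a vertex \emph{tame}. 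Hence $\{\pm\ee_1,\dots,\pm\ee_d\}$ — which costs exactly $2d$ — already illuminates $\K$ as soon as every vertex is tame, and the problem becomes that of absorbing the \emph{spread} (non‑tame) vertices without exceeding $2d$.

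\emph{Structure of spread vertices and the swap.} The key structural input I would try to establish is that, for a $1$-unconditionally symmetric cap body, the coordinates carrying a spread vertex organize into a partial matching of $[d]$ into pairs, disjoint from the coordinates carrying long axial spikes: for each spread vertex $\p$ there is a pair $\{k,l\}$ with $\sum_{j\notin\{k,l\}}p_{j}^{2}$ small and $|p_k|,|p_l|$ comparable, distinct pairs use disjoint coordinates, and no coordinate of a pair also supports a long axial spike. Convexity of $\K$ should force this: for any vertex $\p$ and any $k$ with $p_k\neq0$, the $1$-unconditionally symmetric copy $\p'$ of $\p$ with its $k$-th coordinate zeroed lies in $\K$, so either $\|\p'\|\le1$ (giving $\sum_{j\neq k}p_j^2\le1$) or $\p'$ lies in the cone over some vertex $\q\notin\{\p,\text{reflection of }\p\}$, which produces a ``helper'' vertex closer to a lower–dimensional coordinate flat; iterating confines each genuinely spread vertex near a coordinate $2$-flat, and disjointness of the removed caps forces the resulting $2$-flats to use disjoint coordinates and to be incompatible with long axial spikes on those coordinates. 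Granting this, for each pair $\{k,l\}$ in the matching I replace the four directions $\pm\ee_k,\pm\ee_l$ by the four diagonal directions $(\pm\ee_k\pm\ee_l)/\sqrt2$; this keeps the total at $2d$, still illuminates every smooth point whose $(k,l)$-block is nonzero, and — by the explicit illumination‑cone description above, together with $\|\p\|<\sqrt2$ for those vertices — illuminates every vertex attached to $\{k,l\}$, while the untouched $\pm\ee_j$ handle the tame vertices and the rest of the sphere.

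\emph{Main obstacle.} The crux is exactly the structural claim of the previous paragraph, and inside it the possibility of vertices spread across three or more coordinates (``higher diagonal families''), for which a per‑pair swap would cost $8$ directions on $3$ coordinates and overspend by precisely the factor that the $4d$ estimate of Theorem~\ref{unconditionally symmetric} throws away. Settling Conjecture~\ref{un-symm-2d} requires either ruling such configurations out for $1$-unconditionally symmetric cap bodies, or proving that whenever they occur they constrain the remaining vertices and removed caps so tightly (via disjointness of the caps) that $2d$ directions still suffice; this delicate accounting — the place where the present paper retreats to $4d$ — is the heart of the conjecture.
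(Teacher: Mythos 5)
The statement you are proving is Conjecture~\ref{un-symm-2d}, which the paper explicitly poses as an \emph{open problem}. The paper proves only the weaker $4d$ bound (Theorem~\ref{unconditionally symmetric}) and offers no proof of the $2d$ bound, so there is no paper argument to compare yours against. What you have written is a proof \emph{strategy} with an acknowledged gap, not a proof — and that is the correct assessment of your own work, since you say so yourself in the ``Main obstacle'' paragraph.

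The parts you do carry out are sound and match the paper's machinery: the reduction to illuminating vertices is Lemma~\ref{o-symmetric-intersecting}, and your Cauchy--Schwarz criterion for ``tame'' vertices (namely, that $-\mathrm{sgn}(p_{k^*})\ee_{k^*}$ illuminates $\p$ iff $\sum_{j\neq k^*}p_j^2<1$) is a correct restatement of the condition that the axis directions fail to pierce the relevant open cap. However, the load-bearing step — the claim that the ``spread'' vertices organize into a partial matching of $[d]$ into disjoint coordinate \emph{pairs} — is not established, and it is in tension with the paper's own Lemma~\ref{lem:kspan}: that lemma classifies the caps missed by $\{\pm\ee_1,\dots,\pm\ee_d\}$ as exactly the $k$-spanning caps for every $2\leq k\leq d$, with centers of the form $(\pm1/\sqrt k,\dots,\pm1/\sqrt k,0,\dots,0)$ up to permutation. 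Nothing in the convexity/packing constraints rules out a single vertex whose cap is $k$-spanning with $k\geq 3$, and your argument via zeroing one coordinate of $\p$ only tells you that the midpoint $\tfrac12(\p+\p'')\in\K$; it does not iterate down to a coordinate $2$-flat. Moreover, even within a single $k$-spanning family the signed sums $s=\sum s_j$ vary across reflections, which is precisely why the paper's Lemma~\ref{lem:4dpierce} needs both the inward-tilted $\uu_j$ and the outward-tilted $\vv_j$ — i.e.\ $4d$ directions, not $2d$. So the gap you flag is real and is exactly why the conjecture remains open; what is missing is a genuine new idea for handling $k$-spanning caps with $k\geq 3$ (or ruling them out), not a refinement of the swap you propose.
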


In the rest of the paper we prove the theorems stated.

\section{Proof of Theorem~\ref{illumination-spiky balls}}

We start with

\begin{definition}\label{underlying-open-caps}
If ${\rm Sp}_{\B^d}[\x_1,\dots ,\x_n]$ is a spiky ball with vertices $\x_1,\dots ,\x_n$ in $\Ee^d$, then let $C_{{\mathbb S}^{d-1}}(\y_i, \alpha_i):={\rm int}\left({\rm conv}(\B^d\cup\{\x_i\})\right)\cap {\mathbb S}^{d-1}$ denote the open spherical cap assigned to the vertex $\x_i$ of ${\rm Sp}_{\B^d}[\x_1,\dots ,\x_n]$, where $1\leq i\leq n$, $0<\alpha_i<\frac{\pi}{2}$ and ${\rm int}(\cdot)$ refers to the interior of the corresponding set in $\Ee^d$.   
\end{definition}

It is easy to see that the direction $\mathbf{v} \in {\mathbb S}^{d-1}$ illuminates the vertex $\x_i$ of the spiky ball ${\rm Sp}_{\B^d}[\x_1,\dots ,\x_n]$ if and only if $\mathbf{v}\in C_{{\mathbb S}^{d-1}}(-\y_i, \frac{\pi}{2}-\alpha_i)$. Thus, by observing that the set $\{\mathbf{v}_{k}: 1\leq k\leq m\}\subset {\mathbb S}^{d-1}$ of directions whose positive hull ${\rm pos}(\{\mathbf{v}_{k}: 1\leq k\leq m\}):=\{\sum_{k=1}^m\lambda_k\mathbf{v}_k\ |\ \lambda_k>0\ {\rm for\ all}\ 1\leq k\leq m\}$ is $\Ee^d$, illuminates the spiky ball ${\rm Sp}_{\B^d}[\x_1,\dots ,\x_n]$ if and only if it illuminates the vertices $\x_1, \dots ,\x_n$ of  ${\rm Sp}_{\B^d}[\x_1,\dots ,\x_n]$, the following statement is immediate.

\begin{lemma}\label{intersecting}
Let ${\rm Sp}_{\B^d}[\x_1,\dots ,\x_n]$ be a spiky (unit) ball with vertices $\x_1,\dots ,\x_n$ in $\Ee^d$. Then
\begin{itemize}
\item[(a)] ${\rm Sp}_{\B^d}[\x_1,\dots ,\x_n]$ is  $2$-illuminable if and only if $C_{{\mathbb S}^{d-1}}(-\y_i, \frac{\pi}{2}-\alpha_i)\cap C_{{\mathbb S}^{d-1}}(-\y_j, \frac{\pi}{2}-\alpha_j)\neq\emptyset$ holds for all $1\leq i<j\leq n$ moreover,
\item[(b)] $\{\mathbf{v}_{k}: 1\leq k\leq m\}\subset {\mathbb S}^{d-1}$ with ${\rm pos}(\{\mathbf{v}_{k}: 1\leq k\leq m\})=\Ee^d$ illuminates ${\rm Sp}_{\B^d}[\x_1,\dots ,\x_n]$ if and only if $C_{{\mathbb S}^{d-1}}(-\y_i, \frac{\pi}{2}-\alpha_i)\cap\{\mathbf{v}_{k}: 1\leq k\leq m\}\neq\emptyset$ holds for all $1\leq i\leq n$.
\end{itemize}
\end{lemma}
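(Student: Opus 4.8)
The plan is to reduce everything to one basic fact about a single spike and then assemble parts (a) and (b). \textbf{Step 1 (one-spike geometry).} Since $\x_i\notin\B^d$ we have $\rho_i:=\|\x_i\|>1$, and $\conv(\B^d\cup\{\x_i\})$ coincides, outside $\inter\B^d$, with the circular tangent cone $K_i$ from the apex $\x_i$ to $\B^d$, whose axis is the line $\oo\x_i$. Elementary trigonometry in a right triangle $\oo T\x_i$ (with $T$ a tangency point, right angle at $T$) shows that the tangency circle lies at angular distance $\alpha_i=\arccos(1/\rho_i)\in(0,\tfrac{\pi}{2})$ from $\y_i:=\x_i/\rho_i$ — so $C_{{\mathbb S}^{d-1}}(\y_i,\alpha_i)=\inter(\conv(\B^d\cup\{\x_i\}))\cap{\mathbb S}^{d-1}$ is exactly the cap of Definition~\ref{underlying-open-caps} — and that the half-angle of $K_i$ at $\x_i$ equals $\tfrac{\pi}{2}-\alpha_i$, with axis direction from $\x_i$ toward $\oo$ equal to $-\y_i$. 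Hence the set of directions pointing from $\x_i$ into $\inter K_i$ is precisely the open cap $C_{{\mathbb S}^{d-1}}(-\y_i,\tfrac{\pi}{2}-\alpha_i)$. Finally, because $\x_i$ is a vertex it lies in no other (closed) spike, so some Euclidean ball about $\x_i$ meets ${\rm Sp}_{\B^d}[\x_1,\dots,\x_n]$ only in $K_i$; thus a direction $\vv$ illuminates the boundary point $\x_i$ of the spiky ball (the half-line $\x_i+t\vv$ enters the interior for arbitrarily small $t>0$) if and only if $\vv$ points into $\inter K_i$, i.e. $\vv\in C_{{\mathbb S}^{d-1}}(-\y_i,\tfrac{\pi}{2}-\alpha_i)$. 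This is the observation stated just before the lemma.

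\textbf{Step 2 (part (a) and the easy half of (b)).} By definition the spiky ball is $2$-illuminable iff for each pair $i<j$ there is a single $\vv\in{\mathbb S}^{d-1}$ illuminating both $\x_i$ and $\x_j$; by Step~1 this says $\vv\in C_{{\mathbb S}^{d-1}}(-\y_i,\tfrac{\pi}{2}-\alpha_i)\cap C_{{\mathbb S}^{d-1}}(-\y_j,\tfrac{\pi}{2}-\alpha_j)$, and such $\vv$ exists iff this intersection is nonempty. For part (b), one implication is immediate: each vertex $\x_i$ is a boundary point of the spiky ball, so if $\{\vv_k\}$ illuminates the whole body it illuminates $\x_i$, and Step~1 forces $\vv_k\in C_{{\mathbb S}^{d-1}}(-\y_i,\tfrac{\pi}{2}-\alpha_i)$ for some $k$ (no positive-hull hypothesis needed here).

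\textbf{Step 3 (converse of (b)).} Assume ${\rm pos}(\{\vv_k\})=\Ee^d$ and that each cap $C_{{\mathbb S}^{d-1}}(-\y_i,\tfrac{\pi}{2}-\alpha_i)$ contains some $\vv_k$; I would show every boundary point $\p$ is illuminated. Since the spiky ball is the finite union of the compact convex sets $\conv(\B^d\cup\{\x_i\})$, each containing $\B^d$, the point $\p$ lies on the boundary of some spike $\conv(\B^d\cup\{\x_{i_0}\})$ (were it interior to that spike it would be interior to the union), so either (i) $\p\in{\mathbb S}^{d-1}$, or (ii) $\p$ lies on a segment $[\x_{i_0},T]$ with $T$ on the tangency circle of spike $i_0$ and $\p\neq T$. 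In case (i), since ${\rm pos}(\{\vv_k\})=\Ee^d$ the open half-space $\{\z\in\Ee^d:\langle\z,\p\rangle<0\}$ contains some $\vv_k$, and then $\|\p+t\vv_k\|^2=1+2t\langle\vv_k,\p\rangle+t^2<1$ for all small $t>0$, so $\p+t\vv_k\in\inter\B^d\subseteq\inter({\rm Sp}_{\B^d}[\x_1,\dots,\x_n])$ and $\vv_k$ illuminates $\p$. In case (ii), choose $\vv_k\in C_{{\mathbb S}^{d-1}}(-\y_{i_0},\tfrac{\pi}{2}-\alpha_{i_0})$, so by Step~1 $\x_{i_0}+s\vv_k\in\inter(\conv(\B^d\cup\{\x_{i_0}\}))$ for arbitrarily small $s>0$; writing $\p=(1-\lambda)\x_{i_0}+\lambda T$ with $\lambda\in[0,1)$ and using that $T$ lies in the (convex) spike, the point $\p+(1-\lambda)s\,\vv_k=(1-\lambda)(\x_{i_0}+s\vv_k)+\lambda T$ lies in $\inter(\conv(\B^d\cup\{\x_{i_0}\}))\subseteq\inter({\rm Sp}_{\B^d}[\x_1,\dots,\x_n])$ for arbitrarily small $s>0$, so $\vv_k$ illuminates $\p$. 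This proves part (b).

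\textbf{Expected difficulty.} There is essentially no obstacle — the statement is bookkeeping once Step~1 is established, which is why the paper calls it immediate. The only points needing care are: justifying that near a vertex the spiky ball coincides with a single tangent cone (this is exactly where the defining property of a vertex and the closedness of the spikes enter), and the short convexity computation in case (ii) showing that any direction illuminating the apex of a spike illuminates every lateral boundary point of that spike.
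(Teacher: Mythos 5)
Your proof is correct and follows exactly the route the paper takes: the paper derives the lemma from the two observations that a direction illuminates the vertex $\x_i$ iff it lies in $C_{{\mathbb S}^{d-1}}(-\y_i,\tfrac{\pi}{2}-\alpha_i)$, and that a direction set with positive hull $\Ee^d$ illuminates the spiky ball iff it illuminates all vertices — precisely your Steps 1 and 3. You simply supply the details (the tangent-cone trigonometry, the use of the vertex condition, and the sphere/lateral-segment case split) that the paper declares immediate.
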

Now, we are set to prove Theorem~\ref{illumination-spiky balls}.

\noindent {\it Part (i)}:  Let ${\rm Sp}_{\B^2}[\x_1,\dots ,\x_n]$ be a $2$-illuminable spiky (unit) disk with vertices $\x_1,\dots ,\x_n$ in $\Ee^2$. Let $\mathcal{C}:=\{ C_{{\mathbb S}^{1}}(-\y_i, \frac{\pi}{2}-\alpha_i)\ |\ 1\leq i\leq n\}$ be the family of open circular arcs (of length $<\pi$) assigned to the vertices of ${\rm Sp}_{\B^2}[\x_1,\dots ,\x_n]$. Without loss of generality we may assume that  $C_{{\mathbb S}^{1}}(-\y_1, \frac{\pi}{2}-\alpha_1)$ contains no other open circular arc of $\mathcal{C}$. As by Part (a) of Lemma~\ref{intersecting} $C_{{\mathbb S}^{1}}(-\y_i, \frac{\pi}{2}-\alpha_i)\cap C_{{\mathbb S}^{1}}(-\y_j, \frac{\pi}{2}-\alpha_j)\neq\emptyset$ holds for all $1\leq i<j\leq n$ therefore, there exist $\mathbf{v}_1, \mathbf{v}_2\in C_{{\mathbb S}^{1}}(-\y_1, \frac{\pi}{2}-\alpha_1)$ with each of them lying sufficiently close to one of the two endpoints of $C_{{\mathbb S}^{1}}(-\y_1, \frac{\pi}{2}-\alpha_1)$ such that $C_{{\mathbb S}^{1}}(-\y_i, \frac{\pi}{2}-\alpha_i)\cap\{\mathbf{v}_1, \mathbf{v}_2\}\neq\emptyset$ holds for all $1\leq i\leq n$. Clearly, $\mathbf{v}_1\neq -\mathbf{v}_2$ and so, one can choose $\mathbf{v}_3\in{\mathbb S}^{1}$ such that ${\rm pos}(\{\mathbf{v}_{k}: 1\leq k\leq 3\})=\Ee^2$. Hence, by  Part (b) of Lemma~\ref{intersecting} $\{\mathbf{v}_{k}: 1\leq k\leq 3\}\subset {\mathbb S}^{1}$ illuminates ${\rm Sp}_{\B^2}[\x_1,\dots ,\x_n]$, implying $I({\rm Sp}_{\B^2}[\x_1,\dots ,\x_n])=3$ in a straightforward way.

\noindent {\it Part (ii)}: Let ${\rm Sp}_{\B^3}[\x_1,\dots ,\x_n]$ be a $2$-illuminable spiky (unit) ball with vertices $\x_1,\dots ,\x_n$ in $\Ee^3$. Let $\mathcal{C}:=\{ C_{{\mathbb S}^{2}}(-\y_i, \frac{\pi}{2}-\alpha_i)\ |\ 1\leq i\leq n\}$ be the family of open spherical caps assigned to the vertices of ${\rm Sp}_{\B^3}[\x_1,\dots ,\x_n]$. By Part (a) of Lemma~\ref{intersecting} any two members of $\mathcal{C}$ intersect. Next, recall the following theorem of Danzer \cite{Da}: If $\mathcal{F}$ is a family of finitely many closed spherical caps on ${\mathbb S}^{2}$ such that every two members of $\mathcal{F}$ intersect, then there exist $4$ points on ${\mathbb S}^{2}$ such that each member of $\mathcal{F}$ contains at least one of them (i.e., $4$ needles are always sufficient to pierce all members of $\mathcal{F}$). Now, applying Danzer's theorem to $\mathcal{C}$ (or rather to the corresponding family of closed spherical caps with each closed spherical cap being somewhat smaller and concentric to an open spherical cap of $\mathcal{C}$) one obtains the existence of $\mathbf{v}_1, \mathbf{v}_2, \mathbf{v}_3, \mathbf{v}_4  \in{\mathbb S}^{2}$ with the property that $\mathbf{v}_1, \mathbf{v}_2, \mathbf{v}_3$ are linearly independent and  $C_{{\mathbb S}^{2}}(-\y_i, \frac{\pi}{2}-\alpha_i)\cap\{\mathbf{v}_1, \mathbf{v}_2,  \mathbf{v}_3, \mathbf{v}_4  \}\neq\emptyset$ holds for all $1\leq i\leq n$. Finally, let us choose $\mathbf{v}_5 \in{\mathbb S}^{2}$ such that ${\rm pos}(\{\mathbf{v}_{k}: 1\leq k\leq 5\})=\Ee^3$. (See Figure~\ref{fig1}.) Thus,  Part (b) of Lemma~\ref{intersecting} implies in a straightforward way that $\{\mathbf{v}_{k}: 1\leq k\leq 5\}\subset {\mathbb S}^{2}$  illuminates ${\rm Sp}_{\B^3}[\x_1,\dots ,\x_n]$ and therefore $I({\rm Sp}_{\B^3}[\x_1,\dots ,\x_n])\leq 5$.
\begin{figure}
    \centering
\begin{tikzpicture} 
\node [above right, inner sep=0] (image) at (0,0) {\includegraphics[scale=0.3]{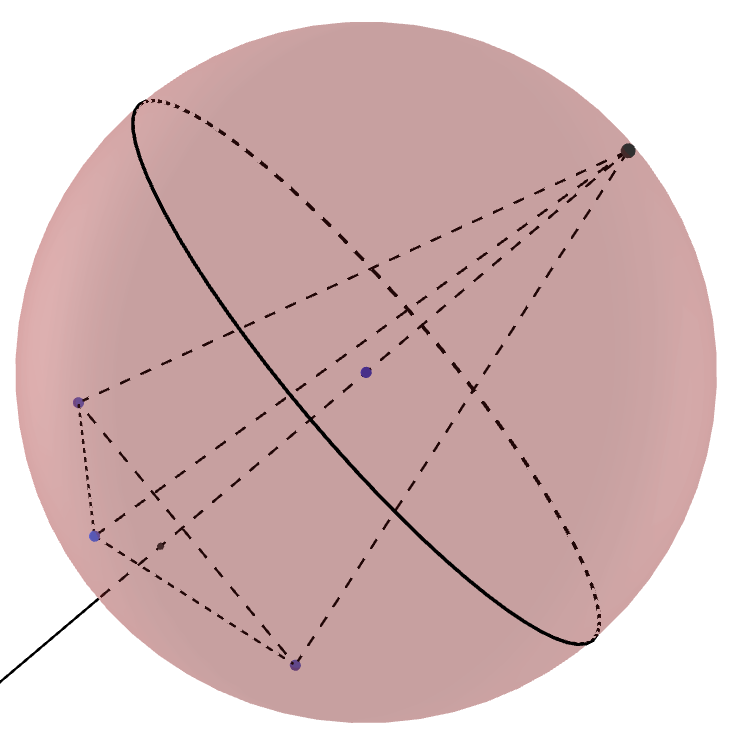}};    
    
\begin{scope}[
x={($0.1*(image.south east)$)},
y={($0.1*(image.north west)$)}];

    \node[above right] at (8.5,8){\large $\mathbf{v}_5$};
    \node[left] at (1.1,4.6){\large $\mathbf{v}_1$};    
    \node[below left] at (1.2,2.8){\large $\mathbf{v}_2$};    
    \node[below right] at (4,1){\large $\mathbf{v}_3$};
    \node[below right] at (5,5){\large $\oo$};
    
\end{scope}
\end{tikzpicture}

    \caption{Constructing $\mathbf{v}_5 \in{\mathbb S}^{2}$ in the proof of Part (ii) of Theorem~\ref{illumination-spiky balls}. }
    \label{fig1}
    
\end{figure}

\noindent {\it Part (iii)}: Let ${\rm Sp}_{\B^d}[\x_1,\dots ,\x_n]$ be a $2$-illuminable spiky (unit) ball with vertices $\x_1,\dots ,\x_n$ in $\Ee^d$, $d\geq 4$. Let $\mathcal{C}:=\{ C_{{\mathbb S}^{d-1}}(-\y_i, \frac{\pi}{2}-\alpha_i)\ |\ 1\leq i\leq n\}$ be the family of open spherical caps assigned to the vertices of ${\rm Sp}_{\B^d}[\x_1,\dots ,\x_n]$. By Part (a) of Lemma~\ref{intersecting} any two members of $\mathcal{C}$ intersect. We need
\begin{definition}\label{Gallai number}
Let $G(2, \B^d)$ denote the smallest positive integer $k$ such that any finite family of pairwise intersecting $d$-dimensional closed balls in $\Ee^d$ is {\rm $k$-pierceable} (i.e., the finite family of balls can be partitioned into $k$ subfamilies each having a non-empty intersection).
\end{definition}
\noindent Now, recall Danzer's estimate (see page 361 in \cite{Ec}) according to which $G(2, \B^d)\leq 1+N_{{\mathbb S}^{d-1}}(\frac{\pi}{6})$. Let $\mathbf{s}\in {\mathbb S}^{d-1}$ be a point which is not a boundary point of any member of $\mathcal{C}$. If $\mathcal{C}'$ (resp., $\mathcal{C}''$) consists of those members of $\mathcal{C}$ that contain $\mathbf{s}$ as an interior (resp., exterior) point, then
clearly $\mathcal{C}=\mathcal{C}'\cup\mathcal{C}''$. Let $H$ be the hyperplane tangent to ${\mathbb S}^{d-1}$ at $-\mathbf{s}$ in $\Ee^d$. If we take the stereographic projection with center $\mathbf{s}$ that maps ${\mathbb S}^{d-1}\setminus\mathbf{s}$ onto $H$, then applying Danzer's estimate to the images of $\mathcal{C}''$ in $H$ we get that there are $1+N_{{\mathbb S}^{d-2}}(\frac{\pi}{6})$ points of ${\mathbb S}^{d-1}$ piercing the members of $\mathcal{C}''$ in ${\mathbb S}^{d-1}$. Hence, $\mathcal{C}$ is pierceable by $2+N_{{\mathbb S}^{d-2}}(\frac{\pi}{6})$ points (including $\mathbf{s}$) in ${\mathbb S}^{d-1}$. As members of $\mathcal{C}$ are open spherical caps of ${\mathbb S}^{d-1}$ therefore there are $3+N_{{\mathbb S}^{d-2}}(\frac{\pi}{6})$ points in ${\mathbb S}^{d-1}$ whose positive hull is $\Ee^d$ such that they pierce the members of $\mathcal{C}$. Thus, by Part (b) of Lemma~\ref{intersecting} we get that $I({\rm Sp}_{\B^d}[\x_1,\dots ,\x_n])\leq 3+N_{{\mathbb S}^{d-2}}(\frac{\pi}{6})$. This completes the proof of Theorem~\ref{illumination-spiky balls}.

\section{Proof of Corollary \ref{estimates-spiky balls}}

First, we recall that according to \cite{TaGa} there exists a covering of ${\mathbb S}^{2}$ using $20$ (closed) spherical caps of angular radius $\frac{\pi}{6}$. Thus, by Part (iii) of Theorem~\ref{illumination-spiky balls} if  ${\rm Sp}_{\B^4}[\x_1,\dots ,\x_n]$ is a $2$-illuminable spiky ball with vertices $\x_1,\dots ,\x_n$ in $\Ee^4$, then $I({\rm Sp}_{\B^4}[\x_1,\dots ,\x_n])\leq 23$.

Second, recall that Theorem 1 of \cite{Du} implies in a straightforward way that 
\begin{equation}\label{inequality1}
3+N_{{\mathbb S}^{d-2}}\left(\frac{\pi}{6}\right)\leq 3+\frac{1}{\Omega_{d-2}(\frac{\pi}{6})}\left(\frac{1}{2}+\frac{2\ln\ln(d-2)}{\ln(d-2)}+\frac{5}{\ln(d-2)}\right)(d-2)\ln(d-2),
\end{equation}
where $\Omega_{d-2}(\frac{\pi}{6})$ is the fraction of the surface of ${\mathbb S}^{d-2}$ covered by a closed spherical cap of angular radius $\frac{\pi}{6}$. Next, the estimate $\Omega_{d-2}(\frac{\pi}{6})>\frac{1}{2^{d-2}\sqrt{2\pi(d-1)}}$ (see for example, Lemma 2.1 in \cite{Na}) combined with (\ref{inequality1}) yields that

\begin{equation}\label{inequality2}
3+N_{{\mathbb S}^{d-2}}\left(\frac{\pi}{6}\right)\leq 3+2^{d-2}\sqrt{2\pi(d-1)}\left(\frac{1}{2}+\frac{2\ln\ln(d-2)}{\ln(d-2)}+\frac{5}{\ln(d-2)}\right)(d-2)\ln(d-2)<2^{d+1}d^{\frac{3}{2}}\ln d
\end{equation}
holds for all $d\geq 5$. Indeed, see Figure~\ref{graph of f(x)} for the graph of the function 
$$f(x):=\frac{3+2^{x-2}\sqrt{2\pi(x-1)}\left(\frac{1}{2}+\frac{2\ln\ln(x-2)}{\ln(x-2)}+\frac{5}{\ln(x-2)}\right)(x-2)\ln(x-2)}{2^{x+1}x^{\frac{3}{2}}\ln x} , x>3$$ 
which clearly implies the last inequality of (\ref{inequality2}). Finally, if  ${\rm Sp}_{\B^d}[\x_1,\dots ,\x_n]$ is a $2$-illuminable spiky (unit) ball with vertices $\x_1,\dots ,\x_n$ in $\Ee^d$, $d\geq 5$, then (\ref{inequality2}) combined with Part (iii) of Theorem~\ref{illumination-spiky balls}
finishes the proof of Corollary \ref{estimates-spiky balls}.

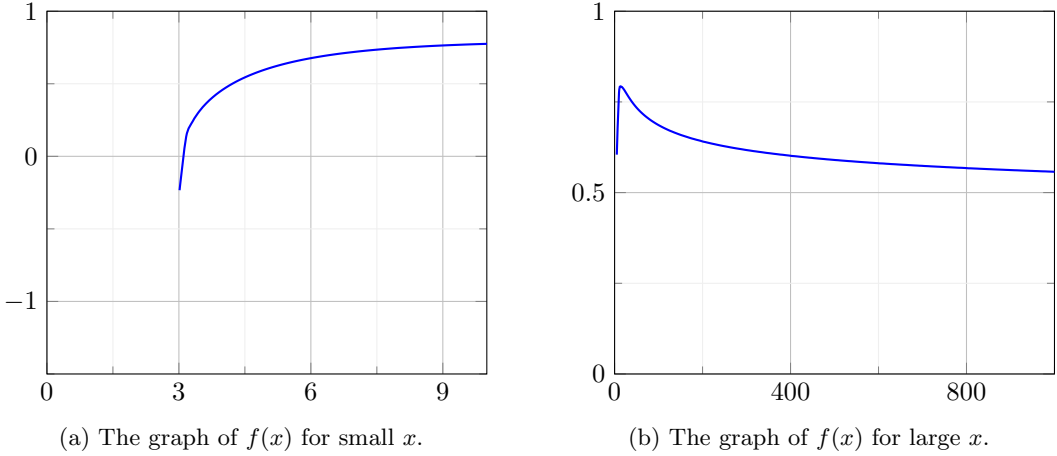
\begin{figure}
\centering
    \begin{subfigure}{.45\linewidth}
      \centering
      
      \begin{tikzpicture}
 
        \begin{axis}[
             xmin = 0, xmax = 10,
    ymin = -1.5, ymax = 1.0,
    xtick distance = 3,
    ytick distance = 1,
    grid = both,
    minor tick num = 1,
    major grid style = {lightgray},
    minor grid style = {lightgray!25},
    width = \textwidth,
     ]
        \addplot[
                domain = 0:30,
                samples = 200,
                smooth,
                thick,
                blue,
        ]{(3 + ( 2^(x-2) ) * ( (2*pi*(x-1))^0.5 ) * ( 0.5 + (2 * ln(ln(x-2)) / (ln(x-2))) + ( 5/(ln(x-2)) )) * (x-2) * ln(x-2)) / ( 2^(x+1) * x^1.5 * ln(x))};
    \end{axis}

\end{tikzpicture}
      \caption{The graph of $f(x)$ for small $x$.}
      
\end{subfigure}
    \begin{subfigure}{.45\textwidth}
      \centering

      \begin{tikzpicture}
 
        \begin{axis}[
             xmin = 0, xmax = 1000,
    ymin = 0, ymax = 1.0,
    xtick distance = 400,
    ytick distance = 0.5,
    grid = both,
    minor tick num = 1,
    major grid style = {lightgray},
    minor grid style = {lightgray!25},
    width = \textwidth,
   ]
    
        \addplot[
                domain = 0:1000,
                samples = 200,
                smooth,
                thick,
                blue,
        ]{(3 + ( 2^(x-2) ) * ( (2*pi*(x-1))^0.5 ) * ( 0.5 + (2 * ln(ln(x-2)) / (ln(x-2))) + ( 5/(ln(x-2)) )) * (x-2) * ln(x-2)) / ( 2^(x+1) * x^1.5 * ln(x))};
    \end{axis}
    
       ;
 
\end{tikzpicture}

      \caption{The graph of $f(x)$ for large $x$.}
     
    \end{subfigure}
  \caption{The graph of $f(x)$.}
  \label{graph of f(x)}
\end{figure}

\section{Proof of Remark \ref{construction1}}
Recall the following construction of Danzer \cite{Da}: there exist $10$ closed circular disks in $\Ee^2$ such that any two of them intersect and it is impossible to pierce them by $3$ needles. It follows in a straightforward way that there exists a family $\mathcal{C}$ of $10$ open circular disks in $\Ee^2$ (each being somewhat larger and concentric to a closed circular disk of the previous family) such that any two of them intersect and it is impossible to pierce them by $3$ needles. Now, Let $H$ be the plane tangent to ${\mathbb S}^{2}$ at the point say, $-\mathbf{s}$ with $\mathcal{C}$ lying in $H$. If we take the stereographic projection with center $\mathbf{s}$ that maps $H$ onto ${\mathbb S}^{2}\setminus\mathbf{s}$ and label the image of the family $\mathcal{C}$ by $\mathcal{C}'$, then $\mathcal{C}'$ is a family of $10$ open spherical caps in ${\mathbb S}^{2}$ such that any two of them intersect and it is impossible to pierce them by $3$ needles. By choosing $\mathcal{C}$ within a small neighbourhood $B_H(-\mathbf{s})$ of $-\mathbf{s}$ in $H$, we get that each member of $\mathcal{C}'$ is an open spherical cap of angular radius $<\frac{\pi}{2}$. Next, let us take the spiky unit ball ${\rm Sp} _{\B^3}[\x_1,\dots ,\x_{10}]$ with $\{C_{{\mathbb S}^{2}}(-\y_i, \frac{\pi}{2}-\alpha_i)\ |\ 1\leq i\leq 10\}=\mathcal{C}'$. Clearly, due to Part (a) of  Lemma~\ref{intersecting}, ${\rm Sp}_{\B^3}[\x_1,\dots ,\x_{10}]$ is $2$-illuminable. Finally, if we choose $B_H(-\mathbf{s})$ sufficiently small, such that the spherical caps of $\mathcal{C}'$ all lie in a hemisphere, then Part (b) of Lemma~\ref{intersecting} and Part (ii) of Theorem~\ref{illumination-spiky balls} yield that $I({\rm Sp}_{\B^3}[\x_1,\dots ,\x_{10}])=5$.

Next, we recall the following construction of Bourgain and Lindenstrauss \cite{BoLi}: there exists $d^*$ such that for any $d\geq d^*$ one possesses a finite point set $P$ of diameter $1$ in $\Ee^d$ whose any covering by unit diameter closed balls requires at least $1.0645^d$ balls. Hence, if we take the unit diameter closed balls centered at the points of $P$ in  $\Ee^d$, then any two balls intersect and it is impossible to pierce them by fewer than $\lfloor1.0645^d\rfloor$ needles. It follows in a straightforward way that for any $d\geq d^*$ there exists a family $\mathcal{C}_d$ of open balls centered at the points of $P$ in $\Ee^d$ (each being somewhat larger and concentric to a unit diameter closed ball of the previous family) such that any two of them intersect and it is impossible to pierce them by fewer than $\lfloor1.0645^d\rfloor$ needles. Now, Let $H$ be the hyperplane tangent to ${\mathbb S}^{d-1}$ at the point say, $-\mathbf{s}$ with $\mathcal{C}_{d-1}$ lying in $H$. If we take the stereographic projection with center $\mathbf{s}$ that maps $H$ onto ${\mathbb S}^{d-1}\setminus\mathbf{s}$ and label the image of the family $\mathcal{C}_{d-1}$ by $\mathcal{C}_{d-1}'$, then $\mathcal{C}_{d-1}'$ is a family of open spherical caps in ${\mathbb S}^{d-1}$ such that any two of them intersect and it is impossible to pierce them by fewer than $\lfloor1.0645^{d-1}\rfloor$ needles. By choosing $\mathcal{C}_{d-1}$ within a small neighbourhood $B_H(-\mathbf{s})$ of $-\mathbf{s}$ in $H$, we get that each member of $\mathcal{C}_{d-1}'$ is an open spherical cap of angular radius $<\frac{\pi}{2}$. Next, let us take the spiky unit ball ${\rm Sp}_{\B^d}[\x_1,\dots ,\x_{n}]$ with $\{C_{{\mathbb S}^{d-1}}(-\y_i, \frac{\pi}{2}-\alpha_i)\ |\ 1\leq i\leq n\}=\mathcal{C}_{d-1}'$. Clearly, due to Part (a) of  Lemma~\ref{intersecting}, ${\rm Sp}_{\B^d}[\x_1,\dots ,\x_{n}]$ is $2$-illuminable. Finally, if we choose $B_H(-\mathbf{s})$ sufficiently small, such that the spherical caps of $\mathcal{C}'_{d-1}$ all lie in a hemisphere, then Part (b) of Lemma~\ref{intersecting} yields that $I({\rm Sp}_{\B^d}[\x_1,\dots ,\x_{n}])\geq 1+\lfloor1.0645^{d-1}\rfloor$, where $d\geq d^*+1$.

\section{Proof of Theorem \ref{illumination-cap bodies}}

\begin{figure}[!h]
\centering
\begin{subfigure}{.5\textwidth}
  \centering
  
  \begin{tikzpicture} 
\node [above right, inner sep=0] (image) at (0,0) {\includegraphics[width=0.8\linewidth]{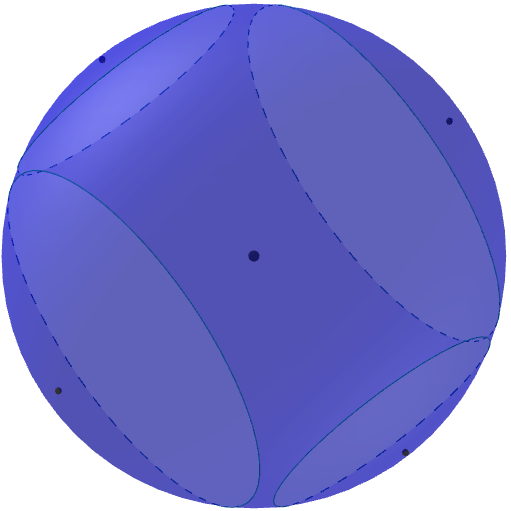}};    
    
\begin{scope}[
x={($0.1*(image.south east)$)},
y={($0.1*(image.north west)$)}];
        
    \node[left] at (2.5,9.5){\large $\y_1$};    
    \node[below left] at (1,2.5){\large $\y_2$};    
    \node[below right] at (8,1){\large $-\y_1$};
    \node[above right] at (9,7.5){\large $-\y_2$};
    \node[below right] at (5,5){\large $\oo$};
    
\end{scope}
\end{tikzpicture}
  \caption{Underlying packing of spherical caps of a cap body.}
  \label{fig:subcaps1}
\end{subfigure}%
\begin{subfigure}{.5\textwidth}
  \centering
    \begin{tikzpicture} 
\node [above right, inner sep=0] (image) at (0,0) {\includegraphics[width=0.8\linewidth]{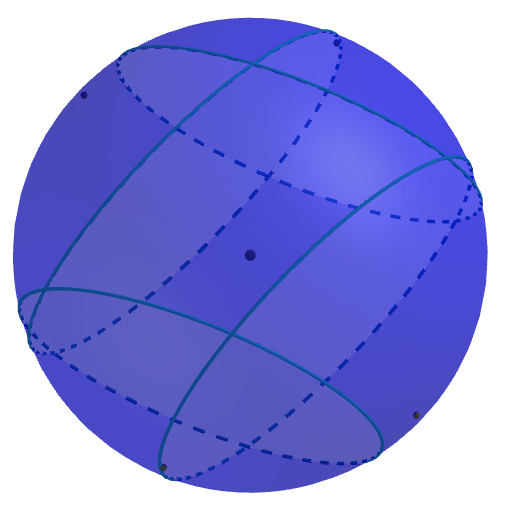}};    
    
\begin{scope}[
x={($0.1*(image.south east)$)},
y={($0.1*(image.north west)$)}];
       
    \node[left] at (1.75,8.5){\large $\y_1$};    
    \node[below left] at (3.5,0.75){\large $\y_2$};    
    \node[below right] at (8.25,2){\large $-\y_1$};
    \node[above right] at (6.5,9.25){\large $-\y_2$};
    \node[above right] at (5,5){\large $\oo$};
    
\end{scope}
\end{tikzpicture}
  \caption{Underlying arrangement of spherical caps of a non-convex spiky ball.}
  \label{fig:subcaps2}
\end{subfigure}
\caption{Underlying caps corresponding to the spiky balls in Figure \ref{fig:test}.}
\label{fig:caps}
\end{figure}

First, using Definition~\ref{underlying-open-caps} we prove

\begin{lemma}\label{o-symmetric-intersecting}
Let ${\rm Sp}_{\B^d}[\pm\x_1,\dots ,\pm\x_n]$ be an $\oo$-symmetric cap body with vertices $\pm\x_1,\dots ,\pm\x_n$ in $\Ee^d$, $d\geq 3$. Then
\begin{itemize}
\item[(a)] $C_{{\mathbb S}^{d-1}}[\pm\y_i, \frac{\pi}{2}-\alpha_i]\cap C_{{\mathbb S}^{d-1}}[\pm\y_j, \frac{\pi}{2}-\alpha_j]\neq\emptyset$ holds for all $1\leq i<j\leq n$ moreover,
\item[(b)] $\{\mathbf{v}_{k}: 1\leq k\leq m\}\subset {\mathbb S}^{d-1}$ with ${\rm pos}(\{\mathbf{v}_{k}: 1\leq k\leq m\})=\Ee^d$ illuminates ${\rm Sp}_{\B^d}[\pm\x_1,\dots ,\pm\x_n]$ if and only if $C_{{\mathbb S}^{d-1}}(\pm\y_i, \frac{\pi}{2}-\alpha_i)\cap\{\mathbf{v}_{k}: 1\leq k\leq m\}\neq\emptyset$ holds for all $1\leq i\leq n$.
\end{itemize}
\end{lemma}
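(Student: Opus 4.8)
The plan is to deduce part (b) almost immediately from Lemma~\ref{intersecting}(b), and to reduce part (a) to the single elementary inequality $\alpha_i+\alpha_j\le\frac{\pi}{2}$ (valid for $i\neq j$), after which (a) becomes a short exercise with spherical distances.

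For part (b), note that an $\oo$-symmetric cap body ${\rm Sp}_{\B^d}[\pm\x_1,\dots,\pm\x_n]$ is in particular a spiky ball whose $2n$ vertices are $\x_1,-\x_1,\dots,\x_n,-\x_n$. Since $\conv(\B^d\cup\{-\x_i\})=-\conv(\B^d\cup\{\x_i\})$ and $-\Sedm=\Sedm$, Definition~\ref{underlying-open-caps} attaches to the vertex $-\x_i$ the open cap $-\copp{\y_i}{\alpha_i}=\copp{-\y_i}{\alpha_i}$. I would then apply Lemma~\ref{intersecting}(b) to this list of $2n$ vertices: a set $\{\vv_k:1\le k\le m\}$ with ${\rm pos}(\{\vv_k:1\le k\le m\})=\Ee^d$ illuminates the body if and only if for every $1\le i\le n$ it meets $\copp{-\y_i}{\frac{\pi}{2}-\alpha_i}$ (the directions illuminating $\x_i$) and also meets $\copp{\y_i}{\frac{\pi}{2}-\alpha_i}$ (the directions illuminating $-\x_i$), which is exactly the assertion in (b).

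For part (a), I would first invoke the classical fact (cf.\ \cite{mink}) that a spiky ball is a cap body precisely when the open caps attached to its vertices are pairwise non-overlapping; indeed, if two of these caps overlapped then the union of the two corresponding spikes --- a subset of the body whose boundary contains the part of $\Sedm$ lying outside the attached caps --- would be non-convex near the overlap. Hence for our body the $2n$ open caps $\copp{\y_i}{\alpha_i}$ and $\copp{-\y_i}{\alpha_i}$, $1\le i\le n$, form a packing of $\Sedm$. In particular, for $1\le i<j\le n$ (note $\y_i\neq\pm\y_j$, since otherwise one of the spikes at $\x_i,\x_j$ would contain the other, contradicting that both are vertices) we get $l(\widehat{\y_i\y_j})\ge\alpha_i+\alpha_j$ and, using the antipodal cap, also $\pi-l(\widehat{\y_i\y_j})=l(\widehat{\y_i,-\y_j})\ge\alpha_i+\alpha_j$; adding these two inequalities gives $\alpha_i+\alpha_j\le\frac{\pi}{2}$.

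It then remains to turn this into a nonempty intersection of the doubled closed caps. Writing $\gamma:=l(\widehat{\y_i\y_j})\in(0,\pi)$ and $\beta_i:=\frac{\pi}{2}-\alpha_i$, $\beta_j:=\frac{\pi}{2}-\alpha_j$, we have $\beta_i+\beta_j=\pi-\alpha_i-\alpha_j\ge\frac{\pi}{2}\ge\min\{\gamma,\pi-\gamma\}$, so either $\gamma\le\beta_i+\beta_j$ or $\pi-\gamma\le\beta_i+\beta_j$. In the first case the point of the geodesic arc $\widehat{\y_i\y_j}$ at spherical distance $\min\{\beta_i,\gamma\}$ from $\y_i$ lies in both $\capp{\y_i}{\beta_i}$ and $\capp{\y_j}{\beta_j}$; in the second case the same computation with $-\y_j$ in place of $\y_j$ produces a point of $\capp{\y_i}{\beta_i}\cap\capp{-\y_j}{\beta_j}$. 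Either way $\capp{\pm\y_i}{\beta_i}\cap\capp{\pm\y_j}{\beta_j}\neq\emptyset$, which is (a). The only non-routine step is the packing/convexity input yielding $\alpha_i+\alpha_j\le\frac{\pi}{2}$; once that is in hand the rest is elementary spherical bookkeeping.
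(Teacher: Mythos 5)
Your part (b) is exactly the paper's argument: the authors also dispose of it with the single sentence that it ``follows from Part (b) of Lemma~\ref{intersecting} in a straightforward way,'' and your identification of the cap attached to the vertex $-\x_i$ as $\copp{-\y_i}{\alpha_i}$ is the content of that reduction. For part (a) both you and the paper start from the same unproved-but-standard input, namely that convexity and symmetry force the closed caps $\capp{\pm\y_i}{\alpha_i}$ to form a packing of $\Sedm$; after that your routes diverge slightly. The paper takes a great hypersphere $H_{ij}$ separating $\capp{\y_i}{\alpha_i}$ from $\capp{\y_j}{\alpha_j}$ and observes that its unit normal $-\mathbf{n}_{ij}$ lies in both enlarged caps, whereas you translate the packing conditions $l(\widehat{\y_i\y_j})\geq\alpha_i+\alpha_j$ and $l(\widehat{\y_i(-\y_j)})\geq\alpha_i+\alpha_j$ into the distance inequalities $\gamma\leq\beta_i+\beta_j$ and $\pi-\gamma\leq\beta_i+\beta_j$ and exhibit a common point on the geodesic arc. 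These are really the same argument in two languages (the existence of the separating hypersphere is itself equivalent to the distance inequality you compute), but your version has the small bonus of isolating the clean inequality $\alpha_i+\alpha_j\leq\frac{\pi}{2}$, and the paper's version has the advantage of producing the witness coordinate-free and of making the ``by symmetry, all sign patterns'' step transparent.

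One point needs repair. The lemma, as it is used in Theorem~\ref{piercing o-symmetric spherical caps} (condition (\ref{main-condition}) and the step ``and therefore also $C_{{\mathbb S}^{d-1}}[\z_i,\beta]\cap C_{{\mathbb S}^{d-1}}[\z_1,\beta_1]\neq\emptyset$'' in Sublemma~\ref{11-aa}), must be read as asserting that \emph{all four} sign combinations $\capp{\epsilon\y_i}{\beta_i}\cap\capp{\delta\y_j}{\beta_j}$, $\epsilon,\delta\in\{\pm1\}$, are nonempty; this is also how the paper's proof reads (``the same argument and symmetry will imply\dots''). Your closing step, however, argues from $\beta_i+\beta_j\geq\frac{\pi}{2}\geq\min\{\gamma,\pi-\gamma\}$ and concludes with an ``either\dots or,'' which only certifies one sign combination per pair. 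Fortunately you have already derived both packing inequalities, and each one separately yields one of the two cases: $\pi-\gamma\geq\alpha_i+\alpha_j$ gives $\gamma\leq\beta_i+\beta_j$, hence $\capp{\y_i}{\beta_i}\cap\capp{\y_j}{\beta_j}\neq\emptyset$, while $\gamma\geq\alpha_i+\alpha_j$ gives $\pi-\gamma\leq\beta_i+\beta_j$, hence $\capp{\y_i}{\beta_i}\cap\capp{-\y_j}{\beta_j}\neq\emptyset$; the remaining two combinations follow by applying $\x\mapsto-\x$. So replace ``either/or'' by ``both,'' and the proof is complete. (A tiny further remark: to rule out $\y_i=-\y_j$ you should invoke the spike at $-\x_j$ rather than the one at $\x_j$.)
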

\begin{proof}
Due to convexity and symmetry of ${\rm Sp}_{\B^d}[\pm\x_1,\dots ,\pm\x_n]$, the underlying spherical caps $C_{{\mathbb S}^{d-1}}[\pm\y_i, \alpha_i]$, $1\leq i\leq n$ form a packing in ${\mathbb S}^{d-1}$ (see the Figure \ref{fig:caps} for the examples of the spiky ball cap configurations). Now, let $1\leq i<j\leq n$. For Part (a) it is sufficient to show that 
\begin{equation}\label{intersection-a}
C_{{\mathbb S}^{d-1}}\left[-\y_i, \frac{\pi}{2}-\alpha_i\right]\cap C_{{\mathbb S}^{d-1}}\left[\y_j, \frac{\pi}{2}-\alpha_j\right]\neq\emptyset .
\end{equation} 
(Namely, the same argument and symmetry will imply that $C_{{\mathbb S}^{d-1}}[\pm\y_i, \frac{\pi}{2}-\alpha_i]\cap C_{{\mathbb S}^{d-1}}[\pm\y_j, \frac{\pi}{2}-\alpha_j]\neq\emptyset$.) Let $H_{ij}$ be a hyperplane passing through $\oo$ and separating  $C_{{\mathbb S}^{d-1}}[\y_i, \alpha_i]$ and $C_{{\mathbb S}^{d-1}}[\y_j, \alpha_j]$. Furthermore, let $\mathbf{n}_{ij}\in{\mathbb S}^{d-1}$ (resp., $-\mathbf{n}_{ij}\in{\mathbb S}^{d-1}$) be on the same side of $H_{ij}$ as $C_{{\mathbb S}^{d-1}}[\y_i, \alpha_i]$ (resp., $C_{{\mathbb S}^{d-1}}[\y_j, \alpha_j]$) such that $ \langle \pm\mathbf{n}_{ij},\z\rangle=0 $ for all $\z\in H_{ij}$. Clearly $-\mathbf{n}_{ij}\in C_{{\mathbb S}^{d-1}}\left[-\y_i, \frac{\pi}{2}-\alpha_i\right]$ moreover, $\mathbf{n}_{ij}\in C_{{\mathbb S}^{d-1}}\left[-\y_j, \frac{\pi}{2}-\alpha_j\right]$ implying $-\mathbf{n}_{ij}\in C_{{\mathbb S}^{d-1}}\left[\y_j, \frac{\pi}{2}-\alpha_j\right]$. Thus, (\ref{intersection-a}) follows, finishing the proof of Part (a). Finally, Part (b) follows from Part (b) of Lemma~\ref{intersecting} in a straightforward way.
\end{proof}

Second, based on Lemma~\ref{o-symmetric-intersecting}, in order to prove Theorem \ref{illumination-cap bodies} it is sufficient to show

\begin{theorem}\label{piercing o-symmetric spherical caps}
Let $\{C_{{\mathbb S}^{d-1}}[\pm\z_i, \beta_i]\ |\ 1\leq i\leq n\}\subset{\mathbb S}^{d-1}$ be an $\oo$-symmetric family of $2n$ closed spherical caps with $d\geq 3$ and $0<\beta_i<\frac{\pi}{2}$, $1\leq i\leq n$ such that
\begin{equation}\label{main-condition}
 C_{{\mathbb S}^{d-1}}[\pm\z_i, \beta_i]\cap C_{{\mathbb S}^{d-1}}[\pm\z_j, \beta_j]\neq\emptyset
 \end{equation}\label{11}
holds for all $1\leq i<j\leq n$. Then there exist $\mathbf{u}_1,\dots ,\mathbf{u}_N\in{\mathbb S}^{d-1}$ with $N=2+N_{{\mathbb S}^{d-2}}\left(\frac{\pi}{4}\right)$ and ${\rm pos}(\{\mathbf{u}_{k}: 1\leq k\leq N\})=\Ee^d$ such that
 $C_{{\mathbb S}^{d-1}}(\pm\z_i, \beta_i)\cap\{\mathbf{u}_{k}: 1\leq k\leq N\}\neq\emptyset$ holds for all $1\leq i\leq n$.
 \end{theorem}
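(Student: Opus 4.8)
The plan is to recast the statement in projective language, reduce it to a covering problem on an equatorial subsphere, and then run a Danzer-type needle argument anchored at a cap of maximal radius; the role of the central symmetry will be to shrink the relevant covering radius from the $\frac{\pi}{6}$ that governs arbitrary pairwise-intersecting balls down to $\frac{\pi}{4}$.

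First I would pass to \emph{double caps}. For each $i$ put $\mathcal{D}_i:=C_{{\mathbb S}^{d-1}}[\z_i,\beta_i]\cup C_{{\mathbb S}^{d-1}}[-\z_i,\beta_i]=\{\x\in{\mathbb S}^{d-1}:\ |\langle\x,\z_i\rangle|\ge\cos\beta_i\}$. Since $\beta_i<\frac{\pi}{2}$, $\mathcal{D}_i$ is the full preimage under the double cover ${\mathbb S}^{d-1}\to{\mathbb{RP}}^{d-1}$ of the metric ball of radius $\beta_i$ about $[\z_i]$, and hypothesis (\ref{main-condition}) says exactly that these $n$ projective balls pairwise intersect. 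Because $\mathcal{D}_i=-\mathcal{D}_i$, a single point $\uu$ pierces $\mathcal{D}_i$ as soon as one of $\pm\uu$ lies in $C_{{\mathbb S}^{d-1}}[\z_i,\beta_i]$, so it is enough to produce $\uu_1,\dots,\uu_N$ with ${\rm pos}(\{\uu_k\})=\Ee^d$ each of which lies in the interior of every $\mathcal{D}_i$ it is meant to pierce; passing between the open caps of the statement and the closed caps used here is a routine perturbation (finitely many caps) which I would not dwell on.

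Next I would anchor at a cap of maximal radius. Relabel so that $\beta_1=\max_i\beta_i$ and rotate so that $\z_1=\ee_d$. Take as the first two of the $N$ points the poles $\uu_1=\ee_d$, $\uu_2=-\ee_d$. They pierce every $\mathcal{D}_j$ with $\varphi_j:=\arccos|\langle\ee_d,\z_j\rangle|\le\beta_j$; and $\pm\ee_d$ together with any set of points whose equatorial components positively span $\ee_d^{\perp}$ positively span $\Ee^d$, so the poles also look after the ${\rm pos}=\Ee^d$ requirement. It remains to pierce the ``equatorial'' double caps, i.e.\ those with $\beta_j<\varphi_j$, by $N_{{\mathbb S}^{d-2}}(\frac{\pi}{4})$ further points. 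For such $j$, hypothesis (\ref{main-condition}) applied to the pair $\{1,j\}$ gives the single but decisive inequality $\varphi_j\le\beta_1+\beta_j$ (equivalently $\mathcal{D}_j\cap\mathcal{D}_1\neq\emptyset$), so $\varphi_j\in\big(\beta_j,\min\{\tfrac{\pi}{2},\beta_1+\beta_j\}\big]$ and the northern lift of $\z_j$ sits in a known annular region about $\ee_d$. Writing that lift as $\cos\varphi_j\,\ee_d+\sin\varphi_j\,w_j$ with $w_j$ in the equatorial subsphere ${\mathbb S}^{d-2}:=\ee_d^{\perp}\cap{\mathbb S}^{d-1}$, I would fix a covering of ${\mathbb S}^{d-2}$ by $N_{{\mathbb S}^{d-2}}(\frac{\pi}{4})$ caps of angular radius $\frac{\pi}{4}$ and assign to the $k$-th covering cap (centre $w^{(k)}$) a needle $\uu_{k+2}$ in the meridian plane ${\rm span}\{\ee_d,w^{(k)}\}$, at a latitude tuned to the constraint $\beta_j<\varphi_j\le\beta_1+\beta_j$. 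The spherical law of cosines then bounds $\langle\uu_{k+2},\cos\varphi_j\,\ee_d+\sin\varphi_j\,w_j\rangle$ from below in terms of $\langle w^{(k)},w_j\rangle\ge\cos\frac{\pi}{4}=\frac{1}{\sqrt2}$, and one has to check that this lower bound is $\ge\cos\beta_j$ for every admissible pair $(\varphi_j,\beta_j)$.

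This last check is the crux. The caps nearly externally tangent to $\mathcal{D}_1$ (those with $\varphi_j$ close to $\beta_1+\beta_j$) defeat any single latitude if looked at one at a time, and the key observation is that, by central symmetry, such caps cannot be spread out: the pairwise-intersection conditions \emph{among} them force their equatorial directions $w_j$ into a spherical cap of ${\mathbb S}^{d-2}$ whose radius — estimated by a Jung-type inequality — is small enough to fit inside a single $\frac{\pi}{4}$-covering cell, and running the computation uniformly over all overlap depths $\varphi_j\in(\beta_j,\beta_1+\beta_j]$ makes $\frac{\pi}{4}$ come out as the optimal covering radius. (For a general, not necessarily symmetric, pairwise-intersecting family one only controls $\varphi_j\le\beta_i+\beta_j$ after an unknown antipodal choice, loses this clustering, and is left with Danzer's $\frac{\pi}{6}$, which is why Part~(iii) of Theorem~\ref{illumination-spiky balls} is not as sharp.) The regime $\beta_1>\frac{\pi}{4}$, where the complement of $\mathcal{D}_1$ is an equatorial band of angular half-width $\frac{\pi}{2}-\beta_1<\frac{\pi}{4}$, requires a short separate but easier treatment. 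Once the $2+N_{{\mathbb S}^{d-2}}(\frac{\pi}{4})$ points $\pm\ee_d$ and the needles are in place, Theorem~\ref{illumination-cap bodies} follows via Lemma~\ref{o-symmetric-intersecting}(b).
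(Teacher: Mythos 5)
There is a genuine gap, and it sits exactly where you yourself locate the crux. Your ``key observation'' --- that the caps nearly externally tangent to $\mathcal{D}_1$ have equatorial directions $w_j$ forced by pairwise intersection into a single spherical cap of ${\mathbb S}^{d-2}$ of radius at most $\frac{\pi}{4}$ --- is false. Take all $\beta_i=\frac{\pi}{4}+\delta$ with $\delta$ small, $\z_1=\ee_d$, and $\z_2,\dots,\z_d$ pairwise orthogonal unit vectors in the equator $\ee_d^{\perp}$. All four sign-combinations of caps intersect pairwise (every relevant distance equals $\frac{\pi}{2}\le\beta_i+\beta_j$ and $\ge\pi-\beta_i-\beta_j$), each $\mathcal{D}_j$, $j\ge 2$, is within $2\delta$ of external tangency to $\mathcal{D}_1$, yet the directions $w_j=\z_j$ are $d-1$ pairwise orthogonal points of ${\mathbb S}^{d-2}$, whose circumradius is $\arccos(1/\sqrt{d-1})>\frac{\pi}{4}$ for $d\ge 4$. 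No Jung-type inequality will rescue the clustering claim, so the latitude-tuning step has no proof.

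The underlying problem is that you extract from the hypothesis only the one-sided inequality $\varphi_j\le\beta_1+\beta_j$ (``the double caps meet''). The hypothesis, in the form actually delivered by Lemma~\ref{o-symmetric-intersecting}(a) and used in the paper, says that $C_{{\mathbb S}^{d-1}}[\z_j,\beta_j]$ meets \emph{both} $C_{{\mathbb S}^{d-1}}[\z_1,\beta_1]$ and $C_{{\mathbb S}^{d-1}}[-\z_1,\beta_1]$, i.e.\ one also has $l(\widehat{\z_j\z_1})\ge\pi-\beta_1-\beta_j$. This two-sided pinching --- combined with anchoring at the cap of \emph{minimal} radius $\beta_1$, not maximal as you do --- is precisely what the paper's Sublemma~\ref{11-aa} converts, via Napier's rule for the right spherical triangle cut off by the equator $H=\z_1^{\perp}$, into the statement that every trace ${\mathbb S}^{d-2}\cap C_{{\mathbb S}^{d-1}}[\pm\z_j,\beta_j]$ is itself a $(d-2)$-dimensional cap of angular radius at least $\frac{\pi}{4}$. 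Once that is known, the centres of an arbitrary $\frac{\pi}{4}$-covering of ${\mathbb S}^{d-2}$, placed \emph{on} the equator, pierce every remaining cap outright; no latitude tuning, no clustering, and no separate treatment of the regime $\beta_1>\frac{\pi}{4}$ are needed. Your outer architecture ($\pm\z_1$ plus $N_{{\mathbb S}^{d-2}}(\frac{\pi}{4})$ further points, closed-to-open perturbation, positive hull) matches the paper's, but the central lemma you would need is different from the one you assert, and the one you assert is not true.
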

 \begin{proof}
 Without loss of generality we may assume that the points $\{\pm\z_i\ |\ 1\leq i\leq n\}$ are pairwise distinct and
 \begin{equation}\label{11-a}
 0<\beta_1\leq\beta_2\leq\dots\leq\beta_n<\frac{\pi}{2}.
 \end{equation}
 Let $H$ be the hyperplane of $\Ee^d$ with normal vectors $\pm\z_1$ passing through $\oo$, and let ${\mathbb S}^{d-2}:=H\cap{\mathbb S}^{d-1}$.
 \begin{sublemma}\label{11-aa}
 ${\mathbb S}^{d-2}\cap C_{{\mathbb S}^{d-1}}[\pm\z_i, \beta_i]$ is a $(d-2)$-dimensional closed spherical cap of angular radius at least $\frac{\pi}{4}$ for all $2\leq i\leq n$.
 \end{sublemma}
 \begin{proof}
 Let $H^+$ be the closed halfspace of $\Ee^d$ bounded by $H$ that contains $\z_1$. Let $i$ be fixed with $2\leq i\leq n$. Without loss of generality we may assume that $\z_i\in H^+$ and our goal is to show that ${\mathbb S}^{d-2}\cap C_{{\mathbb S}^{d-1}}[\z_i, \beta_i]$ is a $(d-2)$-dimensional closed spherical cap of angular radius at least $\frac{\pi}{4}$. Let $\beta$ be the smallest positive real such that
\begin{equation}\label{subcondition}
\beta_1\leq\beta\leq\beta_i \ \text{and}\ C_{{\mathbb S}^{d-1}}[\z_i, \beta]\cap C_{{\mathbb S}^{d-1}}[-\z_1, \beta_1]\neq\emptyset \ (\text{and therefore also}\  C_{{\mathbb S}^{d-1}}[\z_i, \beta]\cap C_{{\mathbb S}^{d-1}}[\z_1, \beta_1]\neq\emptyset).
\end{equation}
Thus, either $C_{{\mathbb S}^{d-1}}[\z_i, \beta]$ is tangent to $C_{{\mathbb S}^{d-1}}[-\z_1, \beta_1]$ at some point of $\widehat{\z_i (-\z_1)}$ (Case 1) or $\beta_1=\beta$ (Case 2).

\bigskip
\noindent {\it Case 1:} Let $\mathbf{b}_i:=\widehat{\z_i (-\z_1)}\cap{\mathbb S}^{d-2}$ and $\mathbf{a}_i\in{\rm bd}\left(C_{{\mathbb S}^{d-1}}[\z_i, \beta]\right)\cap{\mathbb S}^{d-2}$, where ${\rm bd}(\cdot )$ denotes the boundary of the corresponding set in ${\mathbb S}^{d-1}$. If $\z_i\in H$, then $\z_i=\mathbf{b}_i$ and $\beta=l(\widehat{\mathbf{a}_i\mathbf{b}_i})$ and therefore, (\ref{subcondition}) yields $\frac{\pi}{4}=\frac{2\beta_1+2\beta}{4}\leq\beta$, finishing the proof of Sublemma~\ref{11-aa}. So, we are left with the case when $\mathbf{a}_i, \mathbf{b}_i$, and $\z_i$ are pairwise distinct points on ${\mathbb S}^{d-1}$ and the spherical triangle with vertices $\mathbf{a}_i, \mathbf{b}_i$, and $\z_i$ has a right angle at $\mathbf{b}_i$. Clearly, $l(\widehat{\mathbf{a}_i\z_i})=\beta$ and $l(\widehat{\mathbf{b}_i\z_i})=\beta_1+\beta-\frac{\pi}{2}$. Let $\gamma:=l(\widehat{\mathbf{a}_i\mathbf{b}_i})$. According to Napier's trigonometric rule for the side lengths of a spherical right triangle we have $\cos\beta=\cos\left(\beta_1+\beta-\frac{\pi}{2}\right)\cos\gamma$. As $\frac{\pi}{2}<\beta_1+\beta<\pi$ and $\beta_1\leq\beta<\frac{\pi}{2}$, it follows that
\begin{equation}\label{right-triangle}
\cos\gamma=\frac{\cos\beta}{\sin(\beta_1+\beta)}\leq\frac{\cos\beta}{\sin(2\beta)}=\frac{1}{2\sin\beta}<\frac{1}{2\sin\frac{\pi}{4}}=\frac{1}{\sqrt{2}}.
\end{equation}
Thus, $\gamma>\frac{\pi}{4}$, implying that the angular radius of ${\mathbb S}^{d-2}\cap C_{{\mathbb S}^{d-1}}[\z_i, \beta_i]$ is $>\frac{\pi}{4}$. This completes the proof of Sublemma~\ref{11-aa} in Case 1.

\bigskip
\noindent{\it Case 2:} Move $C_{{\mathbb S}^{d-1}}[\z_i, \beta]$ without changing its radius such that $\z_i$ moves along $\widehat{\z_i \z_1}$ and arrives at $\z_i^*\in \widehat{\z_i \z_1}$ with the property that $C_{{\mathbb S}^{d-1}}[\z_i^*, \beta]$ is tangent to $C_{{\mathbb S}^{d-1}}[-\z_1, \beta_1]$ at some point of $\widehat{\z_i^* (-\z_1)}$. Clearly, 
$${\mathbb S}^{d-2}\cap C_{{\mathbb S}^{d-1}}[\z_i^*, \beta]\subset{\mathbb S}^{d-2}\cap C_{{\mathbb S}^{d-1}}[\z_i, \beta] .$$
Thus, the proof of Case 1 applied to $C_{{\mathbb S}^{d-1}}[\z_i^*, \beta]$ finishes the proof of Sublemma~\ref{11-aa}.
 \end{proof}
 Now, let  $\mathbf{u}_1,\dots ,\mathbf{u}_{N_{{\mathbb S}^{d-2}}\left(\frac{\pi}{4}\right)}\in{\mathbb S}^{d-2}$ such that the $(d-2)$-dimensional closed spherical caps $C_{{\mathbb S}^{d-2}}[\mathbf{u}_j, \frac{\pi}{4}]$, $1\leq j\leq N_{{\mathbb S}^{d-2}}\left(\frac{\pi}{4}\right)$ cover ${\mathbb S}^{d-2}$. It follows via Sublemma~\ref{11-aa} that $\{\mathbf{u}_1,\dots ,\mathbf{u}_{N_{{\mathbb S}^{d-2}}\left(\frac{\pi}{4}\right)}\}\cap\left({\mathbb S}^{d-2}\cap C_{{\mathbb S}^{d-1}}[\pm\z_i, \beta_i]\right)\neq\emptyset$ holds for all $2\leq i\leq n$. If necessary one can reposition the points $\mathbf{u}_1,\dots ,\mathbf{u}_{N_{{\mathbb S}^{d-2}}\left(\frac{\pi}{4}\right)}$ by a properly chosen isometry in ${\mathbb S}^{d-2}$ such that the stronger condition  $\{\mathbf{u}_1,\dots ,\mathbf{u}_{N_{{\mathbb S}^{d-2}}\left(\frac{\pi}{4}\right)}\}\cap\left({\mathbb S}^{d-2}\cap C_{{\mathbb S}^{d-1}}(\pm\z_i, \beta_i)\right)\neq\emptyset$ holds as well for all $2\leq i\leq n$. Finally, adding the points $\pm\z_1$ to $\{\mathbf{u}_1,\dots ,\mathbf{u}_{N_{{\mathbb S}^{d-2}}\left(\frac{\pi}{4}\right)}\}$ completes the proof of Theorem~\ref{piercing o-symmetric spherical caps}.
  \end{proof}

\section{Proof of Corollary \ref{estimates-cap bodies}}

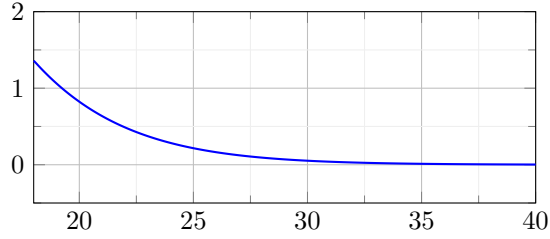
\begin{figure}[!h]
\centering
     \begin{tikzpicture}
 
        \begin{axis}[
     xmin = 18, xmax = 40,
     ymin = -0.5, ymax = 2,
     xtick distance = 5,
     ytick distance = 1,
     grid = both,
     minor tick num = 1,
     major grid style = {lightgray},
     minor grid style = {lightgray!25},
     width = 0.5\textwidth,
    height = 0.25\textwidth
    ]    
        \addplot[
                domain = 18:40,
                samples = 200,
                smooth,
                thick,
                blue,
        ]{ ( 2^(-x) ) * ( 2 + ( 2^((x-2)/2) ) * ((2*pi*(x-1))^0.5) * ( 0.5 + (2 * ln(ln(x-2)) / (ln(x-2))) + ( 5/(ln(x-2)) ) ) * (x-2) * ln(x-2) )};
        
    \end{axis}
\end{tikzpicture}
    \caption{The graph of $g(x)/h(x)$ for $x\geq 20$.}
    \label{g(x)/h(x)}
\end{figure}

Using $N_{{\mathbb S}^{1}}\left(\frac{\pi}{4}\right)=4$ and Theorem~\ref{illumination-cap bodies} one obtains in a straightforward way that any $3$-dimensional centrally symmetric cap body can be illuminated by $6$ directions in $\Ee^3$. Next, recall that $9\leq N_{{\mathbb S}^{2}}\left(\frac{\pi}{4}\right)\leq 10$ (\cite{TaGa}). This statement combined with Theorem~\ref{illumination-cap bodies} yields that any $4$-dimensional centrally symmetric cap body can be illuminated by $12$ directions in $\Ee^4$.

Finally,  recall that Theorem 1 of \cite{Du} implies in a straightforward way that 
\begin{equation}\label{inequality3}
2+N_{{\mathbb S}^{d-2}}\left(\frac{\pi}{4}\right)\leq 2+\frac{1}{\Omega_{d-2}(\frac{\pi}{4})}\left(\frac{1}{2}+\frac{2\ln\ln(d-2)}{\ln(d-2)}+\frac{5}{\ln(d-2)}\right)(d-2)\ln(d-2),
\end{equation}
where $\Omega_{d-2}(\frac{\pi}{4})$ is the fraction of the surface of ${\mathbb S}^{d-2}$ covered by a closed spherical cap of angular radius $\frac{\pi}{4}$. Hence, the estimate $\Omega_{d-2}(\frac{\pi}{4})>\frac{1}{2^{\frac{d-2}{2}}\sqrt{2\pi(d-1)}}$ (see for example, Lemma 2.1 in \cite{Na}) combined with (\ref{inequality3}) yields that

\begin{equation}\label{inequality4}
2+N_{{\mathbb S}^{d-2}}\left(\frac{\pi}{4}\right)\leq 2+2^{\frac{d-2}{2}}\sqrt{2\pi(d-1)}\left(\frac{1}{2}+\frac{2\ln\ln(d-2)}{\ln(d-2)}+\frac{5}{\ln(d-2)}\right)(d-2)\ln(d-2)<2^d
\end{equation}
holds for all $d\geq 20$. Indeed, Figure~\ref{g(x)/h(x)} shows that $g(x)/h(x)<1$ holds for all $x\geq 20$, where $$g(x):=2+2^{\frac{x-2}{2}}\sqrt{2\pi(x-1)}\left(\frac{1}{2}+\frac{2\ln\ln(x-2)}{\ln(x-2)}+\frac{5}{\ln(x-2)}\right)(x-2)\ln(x-2)$$ and $h(x):=2^x$. Thus, (\ref{inequality4}) combined with Theorem~\ref{illumination-cap bodies} finishes the proof of Corollary \ref{estimates-cap bodies}.

\section{Proof of Theorem \ref{unconditionally symmetric}}

             Theorem \ref{unconditionally symmetric} concerns illuminating the cap bodies ${\rm Sp}_{\B^d}[\pm\x_1,\dots ,\pm\x_n]$ that are symmetric about every coordinate hyperplane $H_j = \{\x \in \Ed | \langle \x, \ee_j \rangle =0 \}$, $1\leq j\leq d$ in $\Ed$. According to Lemma \ref{o-symmetric-intersecting}, we only need to show that the open spherical caps $\copp{\y_i}{\pi/2-\alpha_i}, 1\leq i \leq n$ can be pierced by $4d$ points in $\Sd$  such that the positive hull of these $4d$ unit vectors is $\Ed$.

             We start by trying to use the $2d$ points $\{\pm \ee_j \mid 1 \leq j \leq d \}$. If all the above mentioned open spherical caps are pierced by these $2d$ points, then the cap body in question can be illuminated by $2d$ directions and we are done. So, suppose there is a vertex $\x_i$ such that the cap $\copp{\y_i}{\pi/2-\alpha_i}$ isn't pierced by any of the $2d$ points $\{\pm \ee_j \mid 1 \leq j \leq d \}$. Suppose then that $k\geq0$ of the points $\pm \ee_j, 1 \leq j \leq d$ lie on the boundary of this cap, and the rest of these points are not in the cap's closure $\capp{\y_i}{\pi/2-\alpha_i}$. This leads us to

             \begin{definition}\label{def:kspan}
                An open spherical cap $\copp{\y_i}{\pi/2-\alpha_i}$ is a {\rm $k$-spanning cap} if exactly $k\geq 0$ points of the set $\left\{\pm \ee_j \mid 1 \leq j \leq d\right\}$ lie on the boundary of $\copp{\y_i}{\pi/2-\alpha_i}$, and the other $2d-k$ points from the set $\left\{\pm \ee_j \mid 1 \leq j \leq d\right\}$ do not belong to $\capp{\y_i}{\pi/2-\alpha_i}$. The images of a $k$-spanning cap under arbitrary composition of finitely many reflections about the coordinate hyperplanes of  $\Ed$ is called a {\rm $k$-spanning family of caps}.
             \end{definition}

             To properly study these $k$-spanning families, we need the following fact: the underlying spherical caps $C_{{\mathbb S}^{d-1}}[\pm\y_i, \alpha_i]$, $1\leq i\leq n$ of ${\rm Sp}_{\B^d}[\pm\x_1,\dots ,\pm\x_n]$ form a packing in ${\mathbb S}^{d-1}$. This means $ \alpha_i + \alpha_j \leq l(\widehat{\y_i, \y_j}) $ for any $i\neq j \in \{1, \dots, n\}$. For the piercing caps $\copp{\y_i}{\pi/2-\alpha_i}$ and $\copp{\y_j}{\pi/2-\alpha_j}$  we can rewrite this condition as
	
            \begin{equation}\label{paquiv}
                	\left(\pi/2 - \alpha_i\right)+\left(\pi/2 - \alpha_j\right) \geq \pi - l(\widehat{\y_i, \y_j}).            
            \end{equation}

            \begin{lemma} \label{lem:kspan}
                The open spherical cap $\copp{\y}{\varphi}$ with $0<\varphi<\frac{\pi}{2}$ belongs to some $k$-spanning family if and only if the coordinates of $\y$ form a permutation of the sequence $\underbrace{\pm 1/\sqrt{k}, \dots, \pm 1/\sqrt{k}}_k, \underbrace{0, \dots, 0}_{d-k}$ and $\varphi$ is equal to $\arccos (1/k)$, where $2\leq k\leq d$.
            \end{lemma}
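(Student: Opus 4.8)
The plan is to prove both implications by a coordinate-wise analysis, the decisive input being the packing inequality (\ref{paquiv}) applied to a cap together with its own coordinate reflections. First I would record the easy ``dictionary''. By Definition~\ref{def:kspan} a $k$-spanning family is the orbit of one $k$-spanning cap under the group generated by the reflections $\rho_j$ in the hyperplanes $H_j$; since each $\rho_j$ merely permutes signs of coordinates, it leaves every $|y_j|$ unchanged, so \emph{every} member of a $k$-spanning family is itself a $k$-spanning cap. Hence it suffices to characterise when $\copp{\y}{\varphi}$, $0<\varphi<\frac{\pi}{2}$, is a $k$-spanning cap. Now $\pm\ee_j$ lies on $\mathrm{bd}\,\copp{\y}{\varphi}$ iff $\langle\pm\ee_j,\y\rangle=\pm y_j=\cos\varphi$, and lies outside $\capp{\y}{\varphi}$ iff $\pm y_j<\cos\varphi$; as $\cos\varphi>0$, for each fixed $j$ at most one of $\pm\ee_j$ can touch the boundary. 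Therefore $\copp{\y}{\varphi}$ is $k$-spanning exactly when $|y_j|\le\cos\varphi$ for all $j$, with equality for precisely $k$ indices. This is the upper bound on the coordinates.

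The heart of the argument is a matching lower bound. In the situation of Theorem~\ref{unconditionally symmetric} the cap $\copp{\y}{\varphi}=\copp{\y_i}{\pi/2-\alpha_i}$ is one of the piercing caps of a $1$-unconditionally symmetric cap body; write $\alpha:=\alpha_i$, so $\varphi=\frac{\pi}{2}-\alpha$. By $1$-unconditional symmetry, for every $j$ the reflected point $\rho_j\y$ is again the centre of one of the piercing caps, so (\ref{paquiv}) is available for the pair $(\y,\rho_j\y)$ whenever these are distinct, i.e. whenever $y_j\ne0$. Assume for now that $\y$ has at least two nonzero coordinates and fix $j$ with $y_j\ne0$. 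Then (\ref{paquiv}) (with both radii equal to $\alpha$) reads $2\bigl(\tfrac{\pi}{2}-\alpha\bigr)\ge\pi-l(\widehat{\y,\rho_j\y})$, i.e. $l(\widehat{\y,\rho_j\y})\ge 2\alpha=\pi-2\varphi$, so $\langle\y,\rho_j\y\rangle\le\cos(\pi-2\varphi)=-\cos2\varphi$. Since $\langle\y,\rho_j\y\rangle=\|\y\|^2-2y_j^2=1-2y_j^2$ and $-\cos2\varphi=1-2\cos^2\varphi$, this gives $y_j^2\ge\cos^2\varphi$, i.e. $|y_j|\ge\cos\varphi$. So every nonzero coordinate of $\y$ has absolute value at least $\cos\varphi$. (The two excluded cases are trivial: a single nonzero coordinate forces $|y_j|=1>\cos\varphi$, contradicting the upper bound, and having no nonzero coordinate contradicts $\y\in\Sd$; in particular $\y$ genuinely has $\ge2$ nonzero coordinates.)

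Combining the two bounds proves ``$\Rightarrow$''. For each $j$ with $y_j\ne0$ we get $\cos\varphi\le|y_j|\le\cos\varphi$, hence $|y_j|=\cos\varphi$, while each $j$ with $y_j=0$ has $|y_j|=0<\cos\varphi$; thus the indices whose $\pm\ee_j$ touches the boundary are exactly the nonzero coordinates, so $k=\#\{j:y_j\ne0\}$, and $1=\|\y\|^2=k\cos^2\varphi$ forces $\cos\varphi=1/\sqrt{k}$. Hence $\y$ has $k$ coordinates equal to $\pm1/\sqrt{k}$ and $d-k$ equal to $0$, and $\varphi=\arccos(1/\sqrt{k})$; moreover $k\ge2$, since $k=1$ would give $\cos\varphi=1$, i.e. $\varphi=0$, which is excluded. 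For ``$\Leftarrow$'' I would simply check such a pair directly: $\cos\varphi=1/\sqrt{k}\in(0,1)$ so $0<\varphi<\frac{\pi}{2}$; for each of the $k$ nonzero coordinates one of $\pm\ee_j$ has inner product $1/\sqrt{k}=\cos\varphi$ with $\y$ (hence lies on $\mathrm{bd}\,\copp{\y}{\varphi}$) and the other has inner product $-1/\sqrt{k}<\cos\varphi$ (hence lies outside $\capp{\y}{\varphi}$), while for each of the $d-k$ zero coordinates both $\pm\ee_j$ have inner product $0<\cos\varphi$; so exactly $k$ of the $2d$ points lie on the boundary and the rest outside $\capp{\y}{\varphi}$, i.e. $\copp{\y}{\varphi}$ is a $k$-spanning cap.

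The only genuinely substantive step is the second paragraph: realising that the packing condition, applied not to two unrelated caps but to a cap and its reflection $\rho_j$ in a coordinate hyperplane, pushes each nonzero coordinate up to $\cos\varphi$, exactly the value the $k$-spanning condition caps it at. After that everything is routine; the only points that need a word of care are the degenerate configurations with at most one nonzero coordinate and the elementary identity $\cos(\pi-2\varphi)=-\cos2\varphi=1-2\cos^2\varphi$.
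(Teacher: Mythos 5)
Your proof is correct and follows essentially the same route as the paper's: the decisive step in both is applying the packing inequality (\ref{paquiv}) to a cap and its reflection about a coordinate hyperplane $H_j$, which forces each nonzero coordinate of $\y$ up to $\cos\varphi$ (the paper phrases this as a contradiction for a coordinate strictly between $0$ and $\cos\varphi$, you phrase it as a uniform lower bound, but the computation is identical). Your value $\arccos(1/\sqrt{k})$ is the correct one, matching the paper's proof; the $\arccos(1/k)$ in the lemma statement is a typo.
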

            
            \begin{proof}
            
             The statement is trivial in one direction. Namely, it is clear that the open spherical cap with the centre and radius as described is not pierced by any vectors from $\{\pm \ee_j | 1 \leq j \leq d\}$. In particular, it is a $k$-spanning cap for $2\leq k\leq d$. It is also clear, that its images under arbitrary composition of finitely many reflections about the coordinate hyperplanes of  $\Ed$ are $k$-spanning caps as well, forming a $k$-spanning family. 
            
                        So, we are left to prove the non-trivial direction. Since any $k$-spanning family is unconditionally symmetric, we may assume that the cap $\copp{\y}{\varphi}$ with centre $
            \y = (x_1, \dots, x_d)$ is such that the $x_j$'s are non-negative. Without loss of generality, suppose $\ee_1, \dots, \ee_k \in \bd~ \capp{\y}{\varphi}$ and $\ee_{k+1}, \dots, \ee_d$ are not in $\capp{\y}{\varphi}$, where $0<\varphi < \pi/2$. We can rewrite this condition as follows: 

            \begin{equation}\label{eq:coord}
                \begin{cases}
                            x_j = \cos \varphi, \mbox{if } j \leq k \\
                            x_j < \cos \varphi, \mbox{if } j > k
                \end{cases}
            \end{equation}

                    To finish the proof we only need to show that $x_{k+1} = x_{k+2} =, \dots= x_d=0$. Suppose $x_{k+1} > 0$. Then let the cap $\copp{\y'}{\varphi}$ be a reflection of $\copp{\y}{\varphi}$ about the coordinate hyperplane $H_{k+1}$, hence $\y'=(x_1, \dots, x_k, -x_{k+1}, x_{k+2}, \dots, x_d)$. Using the inequality (\ref{paquiv}) for the caps $\copp{\y'}{\varphi}$ and the $\copp{\y}{\varphi}$, we get the following:

            \begin{align}
            \varphi+\varphi &\geq \pi - l(\widehat{\y, \y'})  \\
            \cos(2\varphi) &\leq -\cos(l(\widehat{\y, \y'}))\\
                \cos 2\varphi &\leq -(x_1^2 + \dots + x_k^2 -x_{k+1}^2 + x_{k+2}^2 + \dots + x_d^2) \\
                2\cos^2 \varphi-1 &\leq -1 +2x_{k+1}^2\\
                \cos \varphi &\leq x_{k+1}
            \end{align}

         That clearly contradicts the second part of (\ref{eq:coord}) and so, $x_{k+1}=0$, and the same goes for $x_{k+2},\dots,x_d$. Finally, by the first part of (\ref{eq:coord}) one obtains that $\varphi = \arccos(1/\sqrt{k})$. It follows via $0<\varphi < \pi/2$ that $2\leq k \leq d$, finishing the proof of Lemma~\ref{lem:kspan}.
            \end{proof}

\begin{claim}\label{clm:escape} The open spherical cap $C_{{\mathbb S}^{d-1}}(\x, \alpha)$ is pierced by $\uu \in \Sd$ or $-\uu \in \Sd$ if and only if $\left|\frac{\langle \x,\uu \rangle}{ \cos \alpha} \right| >1 $. \end{claim}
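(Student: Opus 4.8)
The claim is nothing more than the definition of an open spherical cap written out via inner products. Recall from Section~\ref{sec:intro} that $\copp{\x}{\alpha}=\{\y\in\Sd\mid\langle\x,\y\rangle>\cos\alpha\}$, and that a single direction $\uu\in\Sd$ \emph{pierces} this cap exactly when $\uu$ belongs to it, i.e.\ when $\langle\x,\uu\rangle>\cos\alpha$. Applying the same to $-\uu$ and using $\langle\x,-\uu\rangle=-\langle\x,\uu\rangle$, the direction $-\uu$ pierces $\copp{\x}{\alpha}$ exactly when $\langle\x,\uu\rangle<-\cos\alpha$. So the first step is simply to record that $\copp{\x}{\alpha}$ is pierced by $\uu$ or by $-\uu$ if and only if $\langle\x,\uu\rangle>\cos\alpha$ or $\langle\x,\uu\rangle<-\cos\alpha$.

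The second step is to collapse this disjunction into a single inequality. For any real $c>0$ one has $t>c$ or $t<-c$ if and only if $|t|>c$; hence the condition above is equivalent to $|\langle\x,\uu\rangle|>\cos\alpha$. Here $c=\cos\alpha$ is genuinely positive: throughout the proof of Theorem~\ref{unconditionally symmetric} the relevant caps are the underlying open caps $\copp{\y_i}{\pi/2-\alpha_i}$ with $0<\alpha_i<\pi/2$, so their angular radii lie strictly between $0$ and $\pi/2$ and we may assume $0<\alpha<\pi/2$. Dividing $|\langle\x,\uu\rangle|>\cos\alpha$ through by $\cos\alpha>0$ yields $\left|\frac{\langle\x,\uu\rangle}{\cos\alpha}\right|>1$, and multiplying back through by $\cos\alpha$ reverses the step, so the implication runs both ways.

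There is essentially no obstacle here: the only points requiring a word of care are that ``pierced by $\uu$ or $-\uu$'' is a disjunction that becomes an absolute value, and that $\cos\alpha\neq 0$, which legitimizes the division and is guaranteed by $\alpha<\pi/2$. I would keep the written proof to two or three lines.
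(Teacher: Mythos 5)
Your proposal is correct and follows essentially the same route as the paper: identify piercing with membership in the cap, translate the defining condition into $\langle\x,\uu\rangle>\cos\alpha$ (resp.\ $<-\cos\alpha$ for $-\uu$), and combine the two cases into the absolute-value inequality using $\cos\alpha>0$. The paper phrases the first step via the angular distance $l(\widehat{\uu,\x})<\alpha$ before converting to inner products, but since the introduction defines the cap both ways this is the same argument.
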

\begin{proof}
	Clearly, $\uu$ pierces $C_{{\mathbb S}^{d-1}}(\x, \alpha)$ if and only if $l(\widehat{\uu,\x}) < \alpha$, i.e., $\cos \left(l(\widehat{\uu,\x})\right) > \cos \alpha$. 
	As $\alpha$ ranges from 0 to $\frac{\pi}{2} $ it is equivalent to $\frac{\langle \uu, \x \rangle}{\cos \alpha} >1$. Similarly, $-\uu$ piercing $C_{{\mathbb S}^{d-1}}(\x, \alpha)$ is equivalent to $\frac{\langle \uu, \x \rangle}{\cos \alpha}<-1$. Bringing these statements together gives us the claim. 
\end{proof}

\begin{figure}
    \centering
    
\begin{tikzpicture}
\node [above right, inner sep=0] (image) at (0,0) {\includegraphics[scale=0.4]{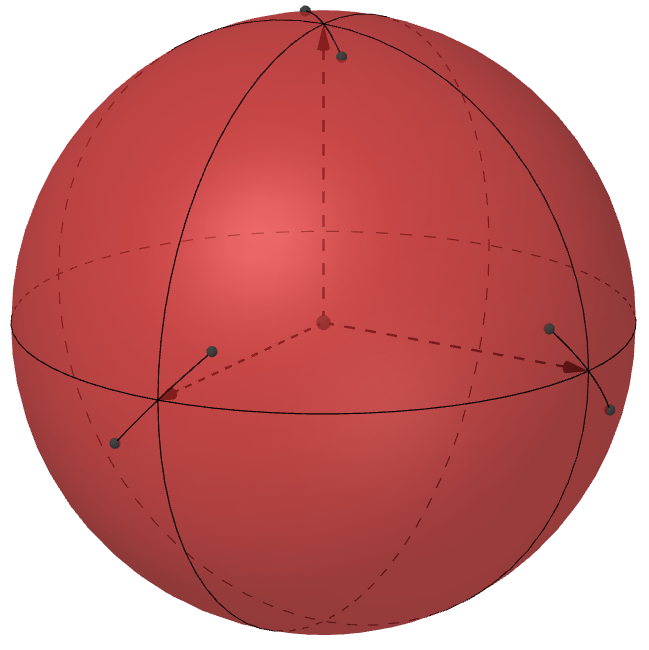}};

   \begin{scope}[
x={($0.1*(image.south east)$)},
y={($0.1*(image.north west)$)}];

   \node[above right] at (5,5){$\mathbf{o}$};
    
    \node[left] at (3.3,4){$\ee_1$};

    \node[above right] at (8,4){$\ee_2$};
    
    \node[below left] at (9.7,4.3){$\varphi$};
    
    \node[left] at (5,9){$\ee_3$};
    
    \node[above left] at (3.05,4.15){$\varphi$};
    
    \node[above right] at (3.35,4.4){$\uu_1$};
    
    \node[above left] at (8.3,4.7){$\uu_2$};
    
    \node[below right] at (5.2,8.9){$\uu_3$};
    
    \node[below right] at (1.8,3.1){$\vv_1$};
    
    \node[below left] at (9.5,3.6){$\vv_2$};
    
    \node[above left] at (4.7,9.7){$\vv_3$};
    
\end{scope}

\end{tikzpicture}

    \caption{Example of the points $\uu_j, \vv_j$ on $\Ss^2$.}
    \label{fig:perturb}
\end{figure}

Each non-$k$-spanning cap is pierced by some point from  $\{\pm \ee_j \mid 1 \leq j \leq d \}$. If we take our new piercing points close enough to $\{\pm \ee_j \mid 1 \leq j \leq d \}$, we still pierce all the non-$k$-spanning caps. Thus, we only need to construct a set of $4d$ points on $\Sd$ such that there is a point in a sufficiently small neighbourhood of every point from $\{\pm \ee_j \mid 1 \leq j \leq d \}$ moreover, every $k$-spanning cap is pierced.

\begin{lemma}\label{lem:4dpierce} Let $\varphi$ be an angle in $(0, \pi/2)$, and let the points $\uu_j,\vv_j \in \Sd, 1 \leq j \leq d$ be defined in the following way:

$$\uu_j = \left(\underbrace{\frac{\sin \varphi}{\sqrt{d-1}}, \frac{\sin \varphi}{\sqrt{d-1}}, \dots, \frac{\sin \varphi}{\sqrt{d-1}}, \cos \varphi}_{j}, \frac{\sin \varphi}{\sqrt{d-1}}, \dots, \frac{\sin \varphi}{\sqrt{d-1}}\right), $$

$$\vv_j = \left(\underbrace{-\frac{\sin \varphi}{\sqrt{d-1}}, -\frac{\sin \varphi}{\sqrt{d-1}}, \dots, -\frac{\sin \varphi}{\sqrt{d-1}}, \cos \varphi}_{j}, -\frac{\sin \varphi}{\sqrt{d-1}}, \dots, -\frac{\sin \varphi}{\sqrt{d-1}}\right).$$

If $\varphi$ is sufficiently small, then the $4d$ vectors $\left\{\pm \uu_j, \pm \vv_j \mid 1 \leq j \leq d \right\}$ pierce any $k$-spanning cap.

\end{lemma}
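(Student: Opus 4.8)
The plan is to show that for every $k$-spanning cap $\copp{\y}{\varphi}$, at least one of the $4d$ vectors $\pm\uu_j, \pm\vv_j$ lies inside it, by invoking Claim~\ref{clm:escape}. By Lemma~\ref{lem:kspan}, after applying an unconditional symmetry I may assume the centre is $\y = \frac{1}{\sqrt k}(\ee_{j_1}+\dots+\ee_{j_k})$ for some index set $S=\{j_1,\dots,j_k\}$ with $2\le k\le d$, and $\varphi_{\text{cap}} = \arccos(1/\sqrt k)$; since a $k$-spanning family is unconditionally symmetric and our $4d$ vectors are \emph{not} a symmetric set in an obvious way, I will instead exploit that the set $\{\pm\uu_j,\pm\vv_j\}$ is itself invariant under all coordinate reflections (reflecting flips signs of coordinates, turning $\uu_j$ into a $\vv$-type or $\uu$-type vector), so it suffices to handle the single representative centre above. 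By Claim~\ref{clm:escape}, I must exhibit $\uu\in\{\uu_j,\vv_j\}$ with $\bigl|\langle \y,\uu\rangle\bigr| > \cos\varphi_{\text{cap}} = 1/\sqrt k$.

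The key computation is to evaluate $\langle \y, \uu_j\rangle$ and $\langle \y, \vv_j\rangle$. Write $s = \sin\varphi/\sqrt{d-1}$ and $c=\cos\varphi$. If the ``special'' coordinate $j$ of $\uu_j$ lies in $S$, then
$$\langle \y,\uu_j\rangle = \frac{1}{\sqrt k}\Bigl((k-1)s + c\Bigr),$$
and if $j\notin S$ then $\langle \y,\uu_j\rangle = \frac{1}{\sqrt k}\cdot k s = \sqrt{k}\, s$. The same holds for $\vv_j$ with $s$ replaced by $-s$, giving $\frac{1}{\sqrt k}(-(k-1)s + c)$ or $-\sqrt k\, s$ respectively. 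Choose the index $j=j_1\in S$ and look at $\uu_{j_1}$: I need
$$\frac{1}{\sqrt k}\bigl((k-1)s + c\bigr) > \frac{1}{\sqrt k}, \quad\text{i.e.}\quad (k-1)\frac{\sin\varphi}{\sqrt{d-1}} + \cos\varphi > 1.$$
This is the crux. As $\varphi\to 0^+$, the left side is $1 - \frac{\varphi^2}{2} + (k-1)\frac{\varphi}{\sqrt{d-1}} + o(\varphi) = 1 + \frac{\varphi}{\sqrt{d-1}}\bigl((k-1) - \tfrac{\sqrt{d-1}}{2}\varphi\bigr) + o(\varphi)$, which exceeds $1$ for all sufficiently small $\varphi>0$ \emph{as long as $k\ge 2$} — precisely the range guaranteed by Lemma~\ref{lem:kspan}. (When $k=1$ the cap is not $k$-spanning in the relevant sense, so this case does not arise.) The threshold on $\varphi$ can be taken uniform in $k$ and $j$ since there are only finitely many values $k\in\{2,\dots,d\}$; indeed the inequality $(k-1)\frac{\sin\varphi}{\sqrt{d-1}}+\cos\varphi>1$ is monotone-friendly and one can solve it explicitly, e.g. it suffices that $\tan(\varphi/2) < \frac{2(k-1)}{\sqrt{d-1}}$, which for $k\ge 2$ holds once $\varphi < 2\arctan\bigl(\tfrac{2}{\sqrt{d-1}}\bigr)$.

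I then fix $\varphi$ smaller than this uniform bound \emph{and} small enough that each $\uu_j$ lies in the prescribed neighbourhood of $\ee_j$ and each $\vv_j$ likewise — so the non-$k$-spanning caps, already pierced by some $\pm\ee_j$, remain pierced (using the openness of those caps, as in the discussion preceding the lemma). Combining: every non-$k$-spanning cap is pierced by a nearby $\pm\uu_j$ or $\pm\vv_j$, and every $k$-spanning cap is pierced by the $\uu_{j_1}$ constructed above. The main obstacle is genuinely just getting the inner-product inequality $(k-1)\sin\varphi/\sqrt{d-1} + \cos\varphi > 1$ to hold simultaneously for all $k\in\{2,\dots,d\}$ while keeping $\varphi$ small enough for the neighbourhood condition; both constraints are open conditions at $\varphi=0^+$ for $k\ge 2$, so a single small $\varphi$ works, and the positive-hull requirement is automatic since $\{\pm\uu_j\}$ already spans $\Ed$. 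This completes the proof of Lemma~\ref{lem:4dpierce}.
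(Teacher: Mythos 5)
There is a genuine gap: your reduction to the all-positive centre $\y=\frac{1}{\sqrt k}(\ee_{j_1}+\dots+\ee_{j_k})$ rests on the claim that the set $\{\pm\uu_j,\pm\vv_j\}$ is invariant under the coordinate reflections, and that claim is false for $d\geq 3$. Reflecting $\uu_1=\bigl(\cos\varphi,\tfrac{\sin\varphi}{\sqrt{d-1}},\dots,\tfrac{\sin\varphi}{\sqrt{d-1}}\bigr)$ about $H_2$ produces a vector with one off-special coordinate equal to $-\tfrac{\sin\varphi}{\sqrt{d-1}}$ and the remaining $d-2$ equal to $+\tfrac{\sin\varphi}{\sqrt{d-1}}$; every vector in $\{\pm\uu_j,\pm\vv_j\}$ has all its off-special coordinates of one sign, so the reflected vector is not in the set. (The full reflection orbit of $\uu_j$ has $2^d$ elements, of which only $\uu_j,\vv_j,-\uu_j,-\vv_j$ belong to your piercing set.) Consequently you cannot pass from one member of a $k$-spanning family to another, and you must treat centres $\y=\frac{1}{\sqrt k}(s_1,\dots,s_d)$ with arbitrary signs $s_j\in\{0,\pm1\}$ directly. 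That the mixed-sign case is not merely a bookkeeping issue is shown by $\y=\frac{1}{\sqrt 2}(\ee_1-\ee_2)$: here $\langle\sqrt2\,\y,\uu_1\rangle=\cos\varphi-\tfrac{\sin\varphi}{\sqrt{d-1}}<1$, so your chosen witness $\uu_{j_1}$ fails and one must switch to $\vv_1$.

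The paper's argument does exactly this: setting $s=\sum_j s_j$, it computes $\langle\uu_j,\sqrt k\,\y\rangle=s_j\cos\varphi+(s-s_j)\tfrac{\sin\varphi}{\sqrt{d-1}}$ and $\langle\vv_j,\sqrt k\,\y\rangle=s_j\cos\varphi-(s-s_j)\tfrac{\sin\varphi}{\sqrt{d-1}}$, uses $\uu_j$ when $s_j(s-s_j)>0$ and $\vv_j$ when $s_j(s-s_j)<0$, and in the residual case $s=s_j=\pm1$ observes that the $s_i$ must contain both signs (else $s=\pm k$ with $k\geq2$), so a different index $j'$ with $s_{j'}\neq s$ can be chosen. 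This index-switching step is the substantive content your proof is missing. (Separately, your explicit threshold should read $\tan(\varphi/2)<(k-1)/\sqrt{d-1}$, not $2(k-1)/\sqrt{d-1}$, but that is a minor slip; the small-$\varphi$ asymptotics you give are correct for the all-positive case.)
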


\begin{proof}

 Essentially, as seen in the Figure \ref{fig:perturb}, we obtain $\uu_j$ by rotating $\ee_j$ with an angle $\varphi$ towards the point $\left(1/\sqrt{k}, \dots, 1/\sqrt{k} \right)$, and $\vv_j$ we get by rotating $\ee_j$  away from the same point. 

Let $C_{{\mathbb S}^{d-1}}(\y, \alpha)$ be an open spherical cap of a $k$-spanning family. Lemma~\ref{lem:kspan} implies that $\alpha = \arccos (1/\sqrt{k})$ and $\y = \frac{1}{\sqrt{k}}(s_1, \dots, s_d)$ such that $s_j \in \{0,\pm 1\}$ and $\sum_{j=1}^{d}s_j^2=k$, where $2\leq k\leq d$. We will need the parameter $s = \sum_{j=1}^d s_j$ as well. Next, we pick some $1 \leq j \leq d$ such that $s_j \neq 0$. According to Claim \ref{clm:escape}, $C_{{\mathbb S}^{d-1}}(\y, \alpha)$ is pierced by $\uu_j$ or $-\uu_j$ (resp., $\vv_j$ or $-\vv_j$) if and only if $\left| \langle \uu_j, \sqrt{k}\y \rangle \right| >1$ (resp., $\left| \langle \vv_j, \sqrt{k}\y \rangle \right| >1$). Now, observe that 

\begin{equation}\label{dot-product}
\langle \uu_j, \sqrt{k}\y \rangle = s_j\cos \varphi + (s-s_j) \frac{\sin \varphi}{\sqrt{d-1}}\ {\rm and}\ \langle \vv_j, \sqrt{k}\y \rangle = s_j\cos \varphi - (s-s_j) \frac{\sin \varphi}{\sqrt{d-1}}.
\end{equation}

If $s_j(s-s_j) >0$, then (\ref{dot-product}) implies that for any sufficiently small $\varphi$ one has $\left|\langle \uu_j, \sqrt{k}\y \rangle\right|>1$. Similarly, if $s_j(s-s_j)<0$, then by (\ref{dot-product}) $\left|\langle \vv_j, \sqrt{k}\y \rangle\right|>1$ holds for any sufficiently small $\varphi$. So, we are left with the case when $s_j(s-s_j) =0$. Since $s_j\neq0$, that yields $s_j=s$. Thus, $s=\pm1$ and so, we just pick a different $j$ so that $s\neq s_j$ and repeat the above process. Indeed, we can do that since $s=\pm1$, and that means we must have both 1's and -1's in the sequence $s_1,\dots,s_d$. Otherwise, the sign of all the non-zero $s_j$'s would be the same, and that would result in $s=\pm k$, a contradiction because $k\geq2$. This completes the proof of Lemma~\ref{lem:4dpierce}.\end{proof}

Clearly, the positive hull of the vectors $\left\{\pm \uu_j, \pm \vv_j \mid 1 \leq j \leq d \right\}$ is $\Ed$. Moreover, if $\varphi$ is sufficiently small, then any cap $\copp{\y_i}{\pi/2-\alpha_i}$ that isn't a $k$-spanning cap for some $1\leq i\leq n$ and $2\leq k\leq d$, is pierced by a point from $\left\{\pm \uu_j, \pm \vv_j \mid 1 \leq j \leq d \right\}$. Finally, if $\copp{\y_i}{\pi/2-\alpha_i}$ is a $k$-spanning cap for some $1\leq i\leq n$ and $2\leq k\leq d$, then Lemma~\ref{lem:4dpierce} implies that it is pierced by the $4d$ vectors $\left\{\pm \uu_j, \pm \vv_j \mid 1 \leq j \leq d \right\}$. That concludes the proof of Theorem \ref{unconditionally symmetric}.

\bigskip

\noindent K\'aroly Bezdek \\
\small{Department of Mathematics and Statistics, University of Calgary, Canada}\\
\small{Department of Mathematics, University of Pannonia, Veszpr\'em, Hungary}\\
\small{E-mail: \texttt{bezdek@math.ucalgary.ca}}

\bigskip

\noindent Ilya Ivanov \\
\small{Department of Mathematics and Statistics, University of Calgary, Canada}\\
\small{E-mail: \texttt{ilya.ivanov1@ucalgary.ca}}

\bigskip

\noindent Cameron Strachan \\
\small{Department of Mathematics and Statistics, University of Calgary, Canada}\\
\small{E-mail: \texttt{braden.strachan@ucalgary.ca}}

\end{document}